\newcommand{\R}{\ensuremath{\mathbb{R}}}
\newcommand{\E}{\ensuremath{\mathbf{E}}}
\renewcommand{\P}{\ensuremath{\mathbf{P}}}
\newcommand{\N}{\ensuremath{\mathbb{N}}}
\renewcommand{\S}{\ensuremath{\mathcal{S}}}
\newcommand{\Sdual}{\ensuremath{\mathcal{S'}}}
\newcommand{\cadlag}{c\`adl\`ag}
\newcommand{\M}{\ensuremath{\mathrm{M1}}}
\begin{document}




\section{Introduction} 
In \cite{Skorokhod1956} Skorokhod introduced four topologies (labelled J1, J2, M1 and M2) on the space of \cadlag\ paths that take values in a Banach space. The most well-known of these is the J1 topology, which has been used extensively for studying stochastic-process limits \cite{billingsley1999, ethier86, whitt2002}. However, the weaker and less popular M1 topology has an important feature: its modulus of continuity vanishes when a monotone function is passed as an argument \cite[Ch.~12 (4.7)]{whitt2002}. This feature has proven useful in applications such as queuing theory \cite{harrison1996, kellawhitt1990, mandelbaummassey1995, pangwhitt2010, resnick2000, whitt2000, whitt2001}, functional statistics \cite{avramtaqqu1992, basrak2015, duffy2011, krizmani2014, whitt1971, wichura1974}, scaling limits for random walks \cite{benarous2007} and mathematical neuroscience \cite{dirt2015}.

The purpose of this paper is to extend the M1 topology to collections of \cadlag\ processes taking values in the space of tempered distributions or, more generally, in the dual of a countably Hilbertian nuclear space (CHNS). Following the work of It\^o \cite{Ito83}, in which a central limit theorem was developed for distribution-valued processes, the J1 topology was extended to these spaces by Mitoma \cite{mitoma83Tight}. These results were extended by Jakubowski to completely regular range spaces \cite{jakubowki1986} and this is the focus of recent work by Kouritzin \cite{kouritzin2014}. The advantage of working in the dual of a CHNS (as opposed to some Hilbertian subspace, as explored in \cite{pavlyukevich2015}) is that these spaces have a strong finite-dimensional character. Consequently, compactness in the path space can be checked test-function-by-test-function: if $A$ is a collection of \cadlag\ paths taking values in the dual of a CHNS, then $A$ is J1-compact if and only if $\{f(\phi) = (t \mapsto f_{t}(\phi)): f \in A\}$ is J1-compact in the space of real-valued \cadlag\ paths, for every test function, $\phi$, in the CHNS \cite[Thm.~3.1 \&\ 4.1]{mitoma83Tight}. Hence the tightness of a sequence of distribution-valued \cadlag\ processes can be established by projecting down to the familiar space of real-valued \cadlag\ processes. It should be mentioned that contributions like \cite{jakubowki1986}, \cite{blount2010} and \cite{kouritzin2014} also contain results using real-valued projections for more general spaces, but they require establishing point-wise tightness or existence of a limit.

Our aim is to fill the gap in theory between these two settings and to combine the temporal properties of the M1 topology with the spatial properties of the tempered distributions. In Section \ref{Sect_Construction} we construct the M1 topology for \cadlag\ paths taking values in the dual of a CHNS (Definition \ref{Construction_Def_M1}). The corresponding compactness and tightness criteria to those of \cite{mitoma83Tight} are stated and proved in Section \ref{Sect_Characterisation} (Theorems \ref{Char_Thm_Compactness}, \ref{Char_Thm_Tightness} and \ref{Char_Thm_WeakConvergence}). Finally, we use our tools on a concrete example in Section \ref{Sect_Application}, specifically we prove the tightness of a sequence of discrete empirical-measure processes that approximate the solution of a stochastic evolution equation with Dirichlet boundary conditions. Here, the mass lost at the boundary is a monotone process, hence the M1 topology offers a simple decomposition trick for controlling the fluctuations in the approximating sequence (Proposition \ref{Ex_Prop_Decomp}).


\section{Construction of the topology}
\label{Sect_Construction}

We refer the reader to \cite{Becnel_nuclearspace, kallianpur1995, Skorokhod1956, whitt2002} for the basic theory of countably Hilbertian nuclear spaces (CHNS) and the standard Skorokhod topologies for Banach range spaces. Our construction will mirror \cite{mitoma83Tight} for the J1 topology. 

Throughout, $E$ will denote a general CHNS (a specific example being \S, the space of rapidly decreasing functions \cite[Ex.~1.3.2]{kallianpur1995}). The properties we will use are:
\begin{itemize}
\item $E$ is a linear topological space generated by an increasing sequence of Hilbertian semi-norms $\left\Vert \cdot \right\Vert_{0} \leq \left\Vert \cdot \right\Vert_{1} \leq \left\Vert \cdot \right\Vert_{2} \leq \cdots$,

\item The closure, $E_{n}$, of $(E, \left\Vert \cdot \right\Vert_{n})$ is a separable Hilbert space with $E_{0} \supseteq E_{1} \supseteq E_{2} \supseteq \cdots$ and $E = \bigcap_{n \geq 0} E_{n}$,

\item For every $n \geq 0$, there exists $m > n$ such that the canonical injection $E_{m} \hookrightarrow E_{n}$ is Hilbert--Schmidt, that is
\[
\sum_{i=1}^{\infty} \Vert e^{m}_{i} \Vert^{2}_{n} < \infty, 
	\qquad \textrm{whenever } \{ e^{m}_{i} \}_{i \geq 1} \textrm{ is an orthonormal system of } E_{m},
\]

\item The topological duals, $E_{-n} = E'_{n}$, satisfy $\left\Vert \cdot \right\Vert_{0} \geq \left\Vert \cdot \right\Vert_{-1} \geq \left\Vert \cdot \right\Vert_{-2} \geq \cdots$, $E_{0} \subseteq E_{-1} \subseteq E_{-2} \subseteq \cdots$ and $E' = \bigcup_{n \geq 0} E_{-n}$,

\item The \emph{strong topology} of $E'$ is that generated by the collection of semi-norms $\{ p_{B} : E' \to [0,\infty) \}_{B \in \mathcal{B}}$, where $p_{B}(f) := \sup_{x \in B} |f(x)|$ and $\mathcal{B}$ is the collection of bounded subsets of $E$. 
\end{itemize}

\begin{definition}[$D_{E'}$]
\label{Construction_Def_DE'}
The space of \cadlag\ paths, $D_{E'}$, is defined to be the collection of functions mapping $[0,T]$ to $E'$ that are right-continuous and have left limits with respect to the strong topology on $E'$. 
\end{definition}

In practice checking Definition \ref{Construction_Def_DE'} is straightforward due to \cite{mitoma83Cont}. As in the classical case, we define the M1 topology on $D_{E'}$ through a (pseudo-)graph distance on $E' \times [0,T]$. The graph of an element in $D_{E'}$ is formed by joining up its points of discontinuity with intervals (see Figure \ref{Fig_JoinUp}):

\begin{definition}[Interval]
For $f$ and $g$ in $E'$, define the \emph{interval} between these points to be 
\[
\left[f, g\right]_{E'}
  := \left\{ \left( 1 - \lambda\right) f + \lambda g \in E' : \lambda \in \left[0,1\right] \right\}.
\]
If $h_{1}$ and $h_{2}$ are two points in the interval, $h_{1} = (1 - \lambda_{1})f + \lambda_{1} g$ and $h_{2} = (1 - \lambda_{2})f + \lambda_{2} g$, then we write $h_{1} \leq_{[f,g]_{E'}} h_{2}$ whenever $\lambda_{1} \leq \lambda_{2}$. 
\end{definition}

\begin{definition}[Graph]
\label{Construction_Def_Graph}
The \emph{graph} of an element $x \in D_{E'}$ is defined to be the closed subset of $E' \times [0,T]$
\[
\gamma_{x} 
  := \left\{\left(z, t\right) \in E' \times [0,T] : z \in \left[x\left(t-\right), x\left(t\right)\right]_{E'} \right\}.
\]
For $\left(z_{1}, t_{1}\right), \left(z_{2}, t_{2}\right) \in \gamma_{x}$, we say that $\left(z_{1}, t_{1}\right) \leq_{\gamma_{x}} \left(z_{2}, t_{2}\right)$ if either:
\[
\textrm{(i) } t_{1} < t_{2}
	\qquad
	\textrm{ or } 
	\qquad \textrm{(ii) } t_{1} = t_{2} \textrm{ and } z_{1} \leq_{\left[x\left(t_{1}-\right), x\left(t_{1}\right)\right]_{E'}} z_{2}.
\]
\end{definition}

\begin{figure}
\centering
\includegraphics[bb=87bp 155bp 340bp 265bp,clip]{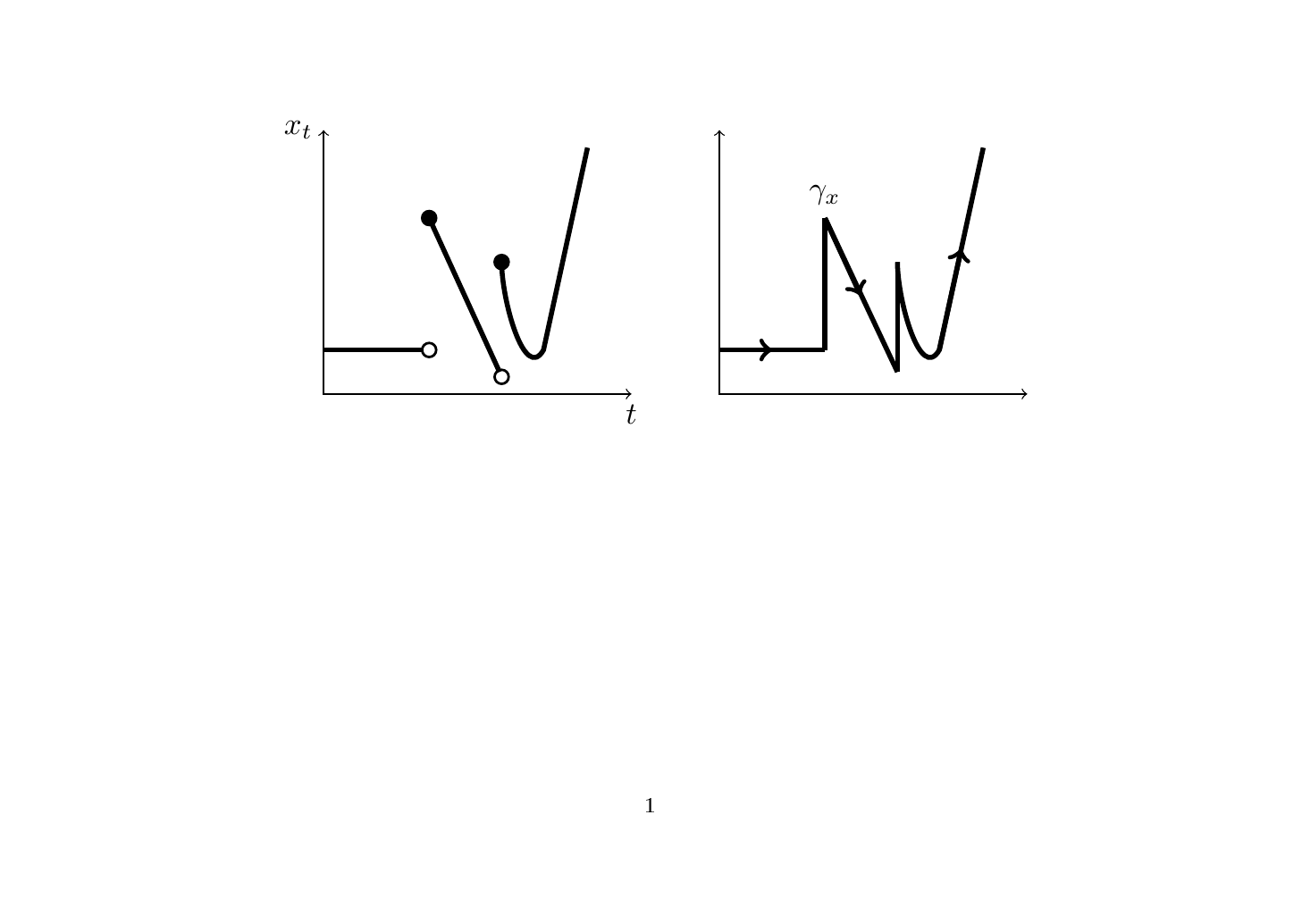}
\vspace{-0.5cm}
\caption{An element of $D_{\R}$ (left) together with its graph, $\gamma_{x} \subseteq \R \times [0,T]$, and graph ordering (right).}
\label{Fig_JoinUp}
\end{figure}

\begin{definition}[Parametric representation]
\label{Construction_Def_ParaRep}
A \emph{parametric representation}, $\lambda_{x}$, of the graph $\gamma_{x}$ is a continuous surjection $\lambda_{x} : [0,1] \to \gamma_{x}$ that is non-decreasing with respect to the graph ordering on $\gamma_{x}$. For $x \in D_{X}$, let the collection of all such parametrisations of $\gamma_{x}$ be denoted $\Lambda_{x}$. 
\end{definition}

We can define a family of pseudometrics on $D_{E'}$ by using the family of semi-norms, $\{p_{B}\}_{B \in \mathcal{B}}$, on $E'$ to measure the graph distance between two parametric representations:

\begin{definition}[A family of pseudometrics]
\label{Construction_Def_Pseudo}
Fix $B \in \mathcal{B}$. Let $x$ and $y$ be elements of $D_{E'}$. For  parametric representations $\lambda_{x} = (z_{x}, t_{x}) \in \Lambda_{x}$ and $\lambda_{y} = (z_{y}, t_{y}) \in \Lambda_{x}$ define 
\[
g_{B}(\lambda_{x}, \lambda_{y})
	:= \sup_{s \in [0,1]} \max \{ p_{B}(z_{x}(s) - z_{y}(s)), |t_{x}(s) - t_{y}(s)| \}.
\]
A pseudometric on $D_{E'}$ is given by 
\[
d_{B, \M}(x, y):= \inf_{\lambda_{x} \in \Lambda_{x},\lambda_{y} \in \Lambda_{y}} g_{B}(\lambda_{x}, \lambda_{y}),
	\qquad \textrm{for } x,y \in D_{E'}.
\]
\end{definition}

The reader can verify that Definition \ref{Construction_Def_Pseudo} gives a pseudometric using \cite[Thm.~12.3.1]{whitt2002}. The family $\{d_{B, \M}\}_{B \in \mathcal{B}}$ generates a topology which we will call the M1 topology on $D_{E'}$:

\begin{definition}[M1 topology]
\label{Construction_Def_M1}
The \emph{M1 topology on $D_{E'}$} is defined to be the projective limit topology of $\{d_{B, \M}\}_{B \in \mathcal{B}}$. That is, the topology with neighbourhoods
\[
\{x \in D_{E'} : d_{B_{i}, \M}(x, x_{0}) < \varepsilon_{i}, \quad \textrm{for } 1 \leq i \leq n\},
\]
where $x_{0} \in D_{E'}$, $B_{i} \in \mathcal{B}$, $\varepsilon_{i} > 0$ and $n \in \N$.
\end{definition}

The following is a collection of facts that are either straightforward or have very similar proofs to the J1 topology case, and so we omit the full details.

\begin{proposition}[Some properties of $(D_{E'}, \M)$]
\label{Con_Prop_Properties}
We have the following:
\begin{enumerate}[(i)]
\item \label{Con_Prop_Properties1} For every $\phi \in E$, the canonical projection
\[
\pi^{\phi} : (D_{E'}, \M) \to (D_{\R}, \M), \qquad \pi^{\phi}(x) = (x_{t}(\phi))_{t \in [0,T]} =: x(\phi)
\]
is continuous,

\item \label{Con_Prop_Properties2} For every $n \geq 0$, the canonical inclusion $\iota_{n} : (D_{E_{-n}}, \M) \to (D_{E'}, \M)$ is continuous,

\item \label{Con_Prop_Properties3} $(D_{E'}, \M)$ is completely regular,

\item \label{Con_Prop_Properties4} Let $\mathfrak{B}(D_{E'})$ denote the Borel $\sigma$-algebra on $(D_{E'},\mathrm{M1})$ and $\mathcal{K}$ the Kolmogorov $\sigma$-algebra generated by the projections
\[
\pi^{\phi_{1}, \phi_{2}, \dots, \phi_{n}}_{t_{1}, t_{2}, \dots, t_{n}} : D_{E'} \rightarrow \R^{n},
  \quad
  \pi^{\phi_{1}, \phi_{2}, \dots, \phi_{n}}_{t_{1}, t_{2}, \dots, t_{n}}(x)
  = (x_{t_{1}}(\phi_{1}), x_{t_{2}}(\phi_{2}), \dots, x_{t_{n}}(\phi_{n})).
\]
Then $\mathfrak{B}(D_{E'}) = \mathcal{K}$. 
\end{enumerate}
\end{proposition}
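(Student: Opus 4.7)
My plan is to dispatch (i), (ii), and (iii) by unpacking the definitions and then to attack (iv) by a reduction to the Polish subspaces $D_{E_{-n}}$, where I expect the main difficulty to lie.

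For (i) I would fix $\phi \in E$ and observe that $\{\phi\}$ is a bounded subset of $E$. Since evaluation at $\phi$ is linear, it maps intervals $[f,g]_{E'}$ onto the real intervals $[f(\phi),g(\phi)]$ in an order-preserving way, so if $\lambda_x = (z_x,t_x) \in \Lambda_x$, then $(z_x(\cdot)(\phi), t_x(\cdot))$ is a parametric representation of $\gamma_{x(\phi)} \subseteq \R \times [0,T]$. Substituting into Definition~\ref{Construction_Def_Pseudo} gives $d_{\M}(x(\phi), y(\phi)) \leq d_{\{\phi\}, \M}(x,y)$, which sends sub-basic neighbourhoods in $(D_\R, \M)$ back into sub-basic neighbourhoods in $(D_{E'}, \M)$. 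For (ii) I would use that $B \in \mathcal{B}$ implies $C_{B,n} := \sup_{\phi \in B} \Vert \phi \Vert_n < \infty$ by continuity of $\Vert \cdot \Vert_n$ on $E$, so that $p_B(f) \leq C_{B,n} \Vert f \Vert_{-n}$ on $E_{-n}$. Because $\iota_n : E_{-n} \hookrightarrow E'$ is continuous, parametric representations in $E_{-n} \times [0,T]$ remain parametric representations in $E' \times [0,T]$, yielding
\[
d_{B, \M}(x,y) \leq \max(1, C_{B,n})\, d^{(n)}_{\M}(x,y) \qquad \text{for } x,y \in D_{E_{-n}},
\]
where $d^{(n)}_{\M}$ is the intrinsic M1 metric of $D_{E_{-n}}$, and continuity of $\iota_n$ follows.

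For (iii) I would invoke the general fact that any topology generated by a family of pseudometrics is completely regular. Explicitly, given a closed set $F \subseteq D_{E'}$ and $x_0 \notin F$, choose a basic open neighbourhood $U = \bigcap_{i=1}^k \{x : d_{B_i, \M}(x,x_0) < \varepsilon_i\} \subseteq F^{c}$ of $x_0$, and define
\[
\rho(x) := \max_{1 \leq i \leq k} \min\bigl(1,\, \varepsilon_i^{-1} d_{B_i, \M}(x,x_0)\bigr),
\]
which is continuous (each $d_{B_i, \M}(\cdot, x_0)$ is), satisfies $\rho(x_0) = 0$, and is identically $1$ on $F$.

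The main obstacle is (iv). The inclusion $\mathcal{K} \subseteq \mathfrak{B}(D_{E'})$ comes cheaply: by (i) each $\pi^{\phi}$ is continuous, hence Borel, and composing with the classical Borel-measurable real-valued time evaluation on $(D_\R, \M)$ yields Borel-measurability of every scalar evaluation $\pi^{\phi}_t$. For the reverse inclusion I would follow Mitoma's strategy~\cite{mitoma83Tight}: by the continuity theorem of~\cite{mitoma83Cont}, every strongly \cadlag\ path in $E'$ is \cadlag\ in some $E_{-n}$, so $D_{E'} = \bigcup_{n} D_{E_{-n}}$. Membership $x \in D_{E_{-n}}$ can be detected from $\sup_t |x_t(\phi)|$ over a countable $\Vert \cdot \Vert_n$-dense subset of $E$, placing $D_{E_{-n}}$ in $\mathcal{K}$. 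Each $D_{E_{-n}}$ is Polish in its intrinsic M1 topology, so its Borel $\sigma$-algebra coincides with its own Kolmogorov $\sigma$-algebra; by (ii), for any $A \in \mathfrak{B}(D_{E'})$ the restriction $A \cap D_{E_{-n}}$ is Borel in $D_{E_{-n}}$, hence representable through countably many evaluations at dense test functions and rational times, which places it in $\mathcal{K}$. Summing over $n$ closes the argument. The delicate step I would need to verify carefully is the compatibility of the two Borel structures on each $D_{E_{-n}}$ together with the $\mathcal{K}$-measurability of $D_{E_{-n}}$ itself, since without these the ``decompose and recombine'' step collapses.
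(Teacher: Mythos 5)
Your proposal is correct and follows essentially the same route as the paper: (i) and (ii) by noting that singletons are bounded and that bounded sets of $E$ have finite $\Vert\cdot\Vert_{n}$-supremum, and (iv) by decomposing $D_{E'}=\bigcup_{n}D_{E_{-n}}$ and using the coincidence of the Borel and Kolmogorov $\sigma$-algebras on each separable metrizable piece.

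Two small caveats. For (iii), the paper's cited result (Kallianpur--Xiong, Thm.~2.1.1) makes the ``separate points from closed sets by continuous functions'' half automatic for any pseudometric-generated topology, so the only thing the paper actually verifies is the separation of points: for $x^{1}\neq x^{2}$ there is $B\in\mathcal{B}$ with $d_{B,\M}(x^{1},x^{2})>0$. Your Urysohn-type function $\rho$ proves the automatic half; if ``completely regular'' is taken in the Tychonoff sense (as the paper's proof indicates), you still owe the Hausdorff half --- fortunately it drops out of your part (i) estimate by taking $B=\{\phi\}$ for any $\phi$ with $x^{1}(\phi)\neq x^{2}(\phi)$. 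For (iv), the step ``$D_{E_{-n}}$ is Polish in its intrinsic M1 topology, so its Borel $\sigma$-algebra coincides with its Kolmogorov $\sigma$-algebra'' does not follow from Polishness alone; what is needed is the specific fact that for a Polish range space the Borel $\sigma$-algebras of the Skorokhod topologies coincide and are generated by the evaluation maps (Whitt, Thm.~11.5.2, together with the argument of Ethier--Kurtz, Prop.~3.7.1), which is exactly the chain $\mathfrak{B}(D_{E_{-n}},\mathrm{M1})=\mathfrak{B}(D_{E_{-n}},\mathrm{J1})=\sigma(\pi_{t})$ the paper invokes. On the other hand, you rightly flag --- more carefully than the paper does --- that one must also check $D_{E_{-n}}\in\mathcal{K}$ and that the intrinsic Kolmogorov $\sigma$-algebra of $D_{E_{-n}}$ (generated by $E_{-n}$-valued evaluations) embeds into $\mathcal{K}$ (generated by scalar evaluations against $\phi\in E$); both reduce to $\mathfrak{B}(E_{-n})$ being generated by countably many functionals drawn from $E$, as you indicate.
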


\begin{proof}
(\ref{Con_Prop_Properties1}) Use that $\{\phi\} \in \mathcal{B}$ for every $\phi \in E$. (\ref{Con_Prop_Properties2}) Straightforward. 

(\ref{Con_Prop_Properties3}) From \cite[Thm.~2.1.1]{kallianpur1995} it suffices to show that for every $x^{1} \neq x^{2} \in D_{E'}$ there exists a bounded set, $B$, such that $d_{B, \M}(x^{1}, x^{2}) > 0$. So take $B = \{ \phi \}$ for any $\phi \in E$ such that $x^{1}(\phi) \neq x^{2}(\phi)$. 

(\ref{Con_Prop_Properties4}) The inclusion $\mathcal{K} \subseteq \mathfrak{B}(D_{E'})$ is straightforward since $\pi^{\phi}_{t} = \pi_{t} \circ \pi^{\phi}$ with the canonical projections
\[
\pi^{\phi} : (D_{E'}, \mathrm{M1}) \rightarrow (D_{\R}, \mathrm{M1})
\quad \textrm{and} \quad
\pi_{t} : (D_{E'}, \R) \rightarrow \R
\]
and $\pi^{\phi}$ is continuous by (\ref{Con_Prop_Properties1}) and $\pi_{t}$ is measurable by \cite[Thm~11.5.2]{whitt2002}. To show $\mathfrak{B}(D_{E'}) \subseteq \mathcal{K}$, let $\iota_{n} : D_{E_{-n}} \hookrightarrow D_{E'}$ be the canonical inclusion. From \cite[Thm.~1.3.1]{kallianpur1995}, $D_{E'} = \bigcup_{n \geq 0} D_{E_{-n}}$, thus for any open subset, $U$, of $(D_{E'}, \mathrm{M1})$
\begin{equation}
\label{eq:MainDef_BigU}
U = \bigcup_{n = 0}^{\infty} \iota_{n}^{-1}(U).
\end{equation}
By (\ref{Con_Prop_Properties2}) $\iota_{n}$ is continuous, so 
\[
\iota_{n}^{-1}(U) 
  \in \mathfrak{B}(D_{E_{-n}}, \mathrm{M1}) 
  = \mathfrak{B}(D_{E_{-n}}, \mathrm{J1}) 
  = \sigma(\pi_{t}:D_{E_{-n}} \rightarrow E_{-n}) 
  \subseteq \sigma(\pi_{t}:D_{E'} \rightarrow E'),
\]
where the first equality follows from \cite[Thm.~11.5.2]{whitt2002} (since $E_{-n}$ is a Banach space) and the second equality follows from the reasoning in \cite[Prop.~3.7.1]{ethier86}.  From \cite[Thm.~3.1.1]{kallianpur1995}, $\mathfrak{B}(E') = \sigma(\pi^{\phi} : E' \rightarrow \R)$, so it follows that $\mathcal{K} = \sigma(\pi_{t} : D_{E'} \rightarrow E')$. Therefore $\iota_{n}^{-1}(U) \in \mathcal{K}$, and the result is complete by (\ref{eq:MainDef_BigU}).
\end{proof}


\section{Compactness and tightness characterisations}
\label{Sect_Characterisation}

The following three theorems characterise compactness, tightness and weak convergence in $(D_{E'}, \M)$. All notation is as defined in Section \ref{Sect_Construction}. 

\begin{theorem}[Compactness characterisation]
\label{Char_Thm_Compactness}
Let $A \subseteq D_{E'}$. The following are equivalent:
\begin{enumerate}[(i)]
\item \label{Char_Thm_Compactness1} $A$ is compact in $(D_{E'}, \mathrm{M1})$,
\item \label{Char_Thm_Compactness2} For every $\phi \in E$, $\pi^{\phi}(A) = \{x(\phi) : x \in A \}$ is compact in $(D_{\R}, \mathrm{M1})$,
\item \label{Char_Thm_Compactness3} There exists $p \in \mathbb{N}$ such that $A \subseteq D_{E_{-p}}$  and $A$ is compact in $(D_{E_{-p}}, \M)$.
\end{enumerate}
\end{theorem}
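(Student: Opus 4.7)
The implications $(iii) \Rightarrow (i) \Rightarrow (ii)$ are immediate from Proposition~\ref{Con_Prop_Properties}: the canonical inclusions $\iota_{n}$ and projections $\pi^{\phi}$ are continuous between the relevant \M\ spaces, and continuous images of compact sets are compact. All the content is in $(ii) \Rightarrow (iii)$, which I would split into two stages.

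\textbf{Stage 1: locating $A$ inside a single $E_{-p_{0}}$.} Since each $\pi^{\phi}(A)$ is \M-compact in $D_{\R}$, it is sup-norm bounded, giving $\sup_{x \in A,\, t \in [0,T]} |x_{t}(\phi)| < \infty$ for every $\phi \in E$. The family $\{x_{t} : x \in A,\, t \in [0,T]\} \subseteq E'$ is therefore pointwise bounded on the Fr\'echet space $E$, hence equicontinuous by the uniform boundedness principle. The structure of the dual of a CHNS (\cite[Thm.~1.3.1]{kallianpur1995}) then places this equicontinuous family inside, and norm-bounded in, some $E_{-p_{0}}$. Combining the uniform $\|\cdot\|_{-p_{0}}$ bound with strong c\`adl\`agness on $E'$ (Definition~\ref{Construction_Def_DE'}) and Mitoma's continuity criterion \cite{mitoma83Cont}, each $x \in A$ is c\`adl\`ag in $E_{-p_{0}}$ and $M := \sup_{x \in A} \sup_{t} \|x_{t}\|_{-p_{0}} < \infty$.

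\textbf{Stage 2: upgrading to \M-compactness in some $D_{E_{-p}}$.} Choose $p > p_{0}$ with $E_{p} \hookrightarrow E_{p_{0}}$ Hilbert--Schmidt, so that the dual inclusion $E_{-p_{0}} \hookrightarrow E_{-p}$ is also Hilbert--Schmidt (in particular compact), and still $A \subseteq D_{E_{-p}}$ with uniform $\|\cdot\|_{-p}$ bound. Fix an orthonormal basis $\{\phi_{k}\}_{k \geq 1} \subseteq E$ of $E_{p}$. Since $(D_{E_{-p}}, \M)$ is metrisable (by $d_{B,\M}$ with $B$ the unit ball of $E_{p}$), compactness is sequential. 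Given $(x_{n}) \subseteq A$, apply (ii) together with a diagonal argument to extract a subsequence $(x_{n_{j}})$ along which $\pi^{\phi_{k}}(x_{n_{j}}) \to y^{k}$ in $(D_{\R}, \M)$ for every $k$. The Hilbert--Schmidt norm of $E_{-p_{0}} \hookrightarrow E_{-p}$ combined with the bound $M$ produces a tail estimate of the form $\sum_{k > N} |x_{n_{j}, t}(\phi_{k})|^{2} \to 0$ as $N \to \infty$, uniformly in $j$ and $t$, which allows one to assemble a c\`adl\`ag $E_{-p}$-valued limit $y$ with $y(\phi_{k}) = y^{k}$ for every $k$. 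One then constructs a joint parametric representation of $\gamma_{y}$ and bounds $d_{B,\M}(x_{n_{j}}, y)$ by a weighted combination of the coordinatewise \M-graph distances (which vanish by choice of subsequence) plus the Hilbert--Schmidt tail, yielding $x_{n_{j}} \to y$ in $(D_{E_{-p}}, \M)$.

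\textbf{Main obstacle.} The delicate step is the last one: turning coordinatewise \M\ convergence into \M\ convergence of the Hilbert-space-valued paths. Unlike the J1 case treated by Mitoma, the \M\ topology permits each real-valued coordinate its own parametric time-reparametrisation, and a single joint parametric representation for the $E_{-p}$-valued limit must simultaneously realise all of these. The Hilbert--Schmidt square-summability is precisely what permits this reconciliation through a uniform tail control, and is the technical heart of the argument.
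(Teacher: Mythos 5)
Your implications (iii) $\Rightarrow$ (i) $\Rightarrow$ (ii) and your Stage 1 match the paper: the localisation of $A$ into a single $D_{E_{-p_{0}}}$ with a uniform norm bound is exactly the first half of \cite[Thm.~2.4.4]{kallianpur1995}, which the paper invokes. The gap is in Stage 2, and it is not the gap you flag. The step ``bound $d_{B,\M}(x_{n_{j}}, y)$ by a weighted combination of the coordinatewise \M-graph distances plus the Hilbert--Schmidt tail'' is false already in finite dimensions, where there is no tail: the (strong) \M\ topology on $D_{\R^{2}}$ is strictly finer than the product of the coordinate \M\ topologies. Take $x_{n} = (\1_{[1/2,1]},\, \1_{[1/2+1/n,1]})$. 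Both coordinate families are relatively \M-compact in $D_{\R}$, but $x_{n}$ passes through the point $(1,0)$, which stays at distance $1/\sqrt{2}$ from the graph of the only candidate limit (whose graph at $t=1/2$ is the diagonal segment from $(0,0)$ to $(1,1)$); so no subsequence converges and the set is not relatively compact in $(D_{\R^{2}},\M)$. Hilbert--Schmidt square-summability controls how many coordinates matter, but the obstruction lives entirely in the first two coordinates, so no tail estimate can repair it. Consequently, extracting coordinatewise \M-convergent subsequences along a fixed orthonormal basis $\{\phi_{k}\}$ of $E_{p}$ and gluing cannot work as described.

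What rescues the theorem, and what your argument discards, is that hypothesis (ii) holds for \emph{every} $\phi \in E$, not merely for a fixed basis. In the example above the projection onto $\phi = e_{1} - e_{2}$ is the bump $\1_{[1/2,1/2+1/n)}$, whose \M\ modulus of continuity does not vanish, so (ii) fails for that $\phi$ --- precisely the information a basis-only argument never sees. The paper's route avoids assembling limits altogether: it works with the \M\ oscillation function $w_{E_{-p},\M}(x;\delta)$ and verifies the Banach-space compactness criterion \cite[Thm.~12.12.2]{whitt2002} directly. The technical heart is Lemma \ref{Technical_Lem_Geometric}, a Heine--Borel covering argument producing, for each $\varepsilon$, a \emph{finite} set of unit directions $\theta_{1},\dots,\theta_{k}$ in $\R^{M}$ such that any segment $\{\lambda v_{1} + (1-\lambda)v_{2}\}$ staying at distance $\geq \varepsilon$ from the origin has its norm dominated, \emph{uniformly in} $\lambda$, by its projection onto a single $\theta_{i}$. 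This converts the vector-valued infimum over $\lambda$ into real-valued infima for the specific ($\varepsilon$-dependent) test functions $\phi_{i} = \sum_{j} \theta_{i}^{(j)} e_{j}^{p}$, to which (ii) is then applied (Lemma \ref{Technical_Lem_MainLem}). If you want to keep a sequential-compactness flavour, you would still need this lemma (or an equivalent) to show that the finite-dimensional projections converge in the strong \M\ sense, so the covering argument cannot be bypassed.
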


\begin{theorem}[Tightness characterisation]
\label{Char_Thm_Tightness}
Let $\left(\mu_{n}\right)_{n \geq 1}$ be a sequence of probability measures on $D_{E'}$. Then $\left(\mu_{n}\right)_{n \geq 1}$ is tight on $(D_{E'}, \mathrm{M1})$ if and only if $\left(\mu_{n} \circ (\pi^{\phi})^{-1}\right)_{n \geq 1}$ is tight on $(D_{\R}, \mathrm{M1})$ for every $\phi \in E$. 

Furthermore, if $\left(\mu_{n}\right)_{n \geq 1}$ is tight on $(D_{E'}, \mathrm{M1})$, then it is relatively compact on $(D_{E'}, \mathrm{M1})$.
\end{theorem}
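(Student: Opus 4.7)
The forward direction is routine: if $(\mu_n)$ is M1-tight on $D_{E'}$, a compact $K \subseteq D_{E'}$ of mass $> 1-\varepsilon$ projects via the M1-continuous $\pi^\phi$ (Proposition \ref{Con_Prop_Properties}(\ref{Con_Prop_Properties1})) to an M1-compact set $\pi^\phi(K) \subseteq D_\R$ carrying mass at least $1 - \varepsilon$ under the push-forward $\mu_n \circ (\pi^\phi)^{-1}$. For the converse, the plan is to mirror Mitoma's J1 argument \cite[Thm.~4.1]{mitoma83Tight}, substituting the M1 compactness criterion of Theorem \ref{Char_Thm_Compactness} for the J1 analogue. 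Fix $\varepsilon > 0$. Using the Hilbert--Schmidt embedding property, pick $p < q$ with $E_q \hookrightarrow E_p$ Hilbert--Schmidt; by density of $E$ in each $E_n$, fix orthonormal bases $\{e_i\}_{i \geq 1}$ of $E_p$ and $\{f_j\}_{j \geq 1}$ of $E_q$ lying in $E$, so that $\sum_j \|f_j\|_p^2 < \infty$.

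For each $i$, projection-tightness yields a compact $K_i \subseteq D_\R$ with $\mu_n(\pi^{e_i}(x) \in K_i) \geq 1 - \varepsilon 2^{-i}$ uniformly in $n$. Define the candidate
\[
A := \bigcap_{i \geq 1} (\pi^{e_i})^{-1}(K_i) \subseteq D_{E'},
\]
so that $\mu_n(A) \geq 1 - \varepsilon$ uniformly in $n$ by countable subadditivity. It remains to verify that $A$ is relatively M1-compact in $D_{E'}$; by Theorem \ref{Char_Thm_Compactness} this reduces to showing $\pi^\phi(\overline{A})$ is relatively M1-compact in $D_\R$ for every $\phi \in E$. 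When $\phi = e_i$ this is automatic from the construction. For arbitrary $\phi \in E \subseteq E_p$, I would expand $\phi = \sum_i \langle \phi, e_i\rangle_p\, e_i$ and transfer this into an expansion of $\pi^\phi(x)$ as a series in $\pi^{e_i}(x)$. Applying Cauchy--Schwarz together with the HS summability $\sum_j \|f_j\|_p^2 < \infty$ then yields uniform sup-norm and M1-modulus bounds on $\pi^\phi(A)$, and simultaneously places all of $A$ inside some $D_{E_{-q}}$ with a common $\|\cdot\|_{-q}$-bound on $\sup_t \|x_t\|_{-q}$.

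The main technical hurdle is this last transfer: unlike the J1 modulus, the M1 modulus of continuity does not decompose cleanly over linear combinations, so to pass from $\pi^{e_i}(A)$-bounds to $\pi^\phi(A)$-bounds via truncated expansions one must invoke that the M1 modulus is dominated by uniform sup-norm perturbations (see \cite[Thm.~12.4.1]{whitt2002}), with the HS cushion making the series tails uniformly small. For the final assertion (tight $\Rightarrow$ relatively compact), note that the compact sets produced above lie in a Polish subspace $(D_{E_{-q}}, \mathrm{M1})$ via Theorem \ref{Char_Thm_Compactness}(\ref{Char_Thm_Compactness3}); applying Prohorov's theorem in this Polish Banach-valued path space extracts weakly convergent subsequences, whose limits are then supported on $D_{E'}$ by the uniform compact containment obtained above.
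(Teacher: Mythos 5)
Your forward direction is fine, but the converse has a genuine gap at exactly the point you flag as ``the main technical hurdle,'' and the proposed fix does not close it. The candidate set $A=\bigcap_i(\pi^{e_i})^{-1}(K_i)$ built from an orthonormal basis need not be relatively M1-compact: Theorem \ref{Char_Thm_Compactness} (\ref{Char_Thm_Compactness2}) demands compactness of $\pi^{\phi}(A)$ for \emph{every} $\phi\in E$, and this cannot be transferred from the basis directions because addition is not M1-continuous and the M1 modulus is not subadditive over linear combinations. Concretely, take $f,g\in E'$ with $f(e_1)=1=g(e_2)$, $f(e_2)=0=g(e_1)$, and set $x^{(\eta)}_t=\1_{\{t\geq 1/2\}}f-\1_{\{t\geq 1/2+\eta\}}g$. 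Then $x^{(\eta)}(e_1)$ and $x^{(\eta)}(e_2)$ are monotone single-jump paths forming relatively compact families in $(D_{\R},\mathrm{M1})$, yet $x^{(\eta)}(e_1+e_2)=\1_{[1/2,1/2+\eta)}$ has $w_{\R,\mathrm{M1}}(\,\cdot\,;\delta)=1$ for every $\delta>\eta$, so $\{x^{(\eta)}(e_1+e_2)\}_{\eta}$ is not relatively compact as $\eta\to 0$. Your Cauchy--Schwarz/HS step only controls the \emph{tails} of the expansion of $\phi$ (and even that presupposes a uniform $\Vert\cdot\Vert_{-q}$ bound on $A$, which the $K_i$ do not supply, since their sup-norm diameters are uncontrolled in $i$); it does nothing for the leading finite sum, which is exactly where the counterexample lives. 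Citing \cite[Thm.~12.4.1]{whitt2002} does not help here: a uniformly small sup-norm perturbation is harmless, but the finitely many leading terms are not a small perturbation.

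The route the paper takes is structurally different in the two places where your argument breaks. First, a regularity step (the probabilistic analogue of the first half of \cite[Thm.~2.4.4]{kallianpur1995}, as in the proof of \cite[Thm.~2.5.1]{kallianpur1995}) uses tightness of $\mu_n\circ(\pi^{\phi})^{-1}$ for \emph{all} $\phi$ simultaneously to produce, for each $\varepsilon$, indices $n<p$ and a constant $c$ with $\mu_n\bigl(\sup_t\Vert x_t\Vert_{-n}\leq c\bigr)\geq 1-\varepsilon$ uniformly in $n$. Second, with that bound in force, Lemma \ref{Technical_Lem_MainLem} dominates $w_{E_{-p},\mathrm{M1}}$ by the scalar moduli along \emph{finitely many} test functions $\phi_1,\dots,\phi_k$ produced by the geometric covering argument of Lemma \ref{Technical_Lem_Geometric} --- not by a basis --- precisely so that the infimum over $\lambda$ survives projection; tightness along those specific $\phi_i$ then yields the modulus bound with high probability. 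Your closing Prohorov step also needs repair: the compacts witnessing tightness may sit in different $D_{E_{-q_j}}$ as the mass defect shrinks, so one cannot work in a single Polish $D_{E_{-q}}$; the paper instead applies the completely-regular/Radon-space form of Prohorov's theorem from \cite{Smolyanov1976}, using metrizability of compacts from Theorem \ref{Char_Thm_Compactness} (\ref{Char_Thm_Compactness3}) and Proposition \ref{Con_Prop_Properties} (\ref{Con_Prop_Properties3}).
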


\begin{theorem}[Weak convergence]
\label{Char_Thm_WeakConvergence}
If $\left(\mu_{n}\right)_{n \geq 1}$ is tight on $(D_{E'}, \M)$ and for every $k \geq 1$, $\phi_{1}, \phi_{2}, \dots,\phi_{k} \in E$ and $t_{1}, t_{2}, \dots, t_{k} \in \mathrm{cont}(\mu)$ 
\[
\mu_{n} \circ ( \pi^{\phi_{1}, \phi_{2}, \dots, \phi_{k}}_{t_{1}, t_{2}, \dots, t_{k}} )^{-1}
  \rightarrow
  \mu \circ ( \pi^{\phi_{1}, \phi_{2}, \dots, \phi_{k}}_{t_{1}, t_{2}, \dots, t_{k}} )^{-1},
  \quad \textrm{weakly on } \R^{k},
\]
as $n \rightarrow \infty$, where $\mathrm{cont}(\mu) = \{t \in [0,T] : \mu\{ x \in D_{E'} : x_{t-} = x_{t} \} = 1\}$, then $\left(\mu_{n} \right)_{n \geq 1}$ converges weakly to $\mu$ on $(D_{E'}, \mathrm{M1})$. 
\end{theorem}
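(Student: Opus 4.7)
The plan is to run a standard subsequential uniqueness argument. Since Theorem \ref{Char_Thm_Tightness} says that the tight sequence $(\mu_{n})$ is relatively compact on $(D_{E'}, \M)$, every subsequence admits a further subsequence converging weakly to some probability measure $\nu$. Showing that any such limit $\nu$ must coincide with $\mu$ then forces $\mu_{n} \to \mu$ weakly on the whole sequence.

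To identify $\nu$, I would first observe that the set $D := \mathrm{cont}(\mu) \cap \mathrm{cont}(\nu)$ is dense in $[0,T]$. Indeed, for any probability $\rho$ on $D_{E'}$, a Fubini argument gives $\int_{0}^{T} \rho\{x : x_{t-} \neq x_{t}\}\,dt = 0$ because each $x \in D_{E'}$ has at most countably many discontinuities, so the complements of both $\mathrm{cont}(\mu)$ and $\mathrm{cont}(\nu)$ in $[0,T]$ have Lebesgue measure zero. Next, for $t_{1}, \dots, t_{k} \in D$ and $\phi_{1}, \dots, \phi_{k} \in E$, I would decompose
\[
\pi^{\phi_{1}, \dots, \phi_{k}}_{t_{1}, \dots, t_{k}} = (\pi_{t_{1}}, \dots, \pi_{t_{k}}) \circ (\pi^{\phi_{1}}, \dots, \pi^{\phi_{k}}),
\]
where each $\pi^{\phi_{i}} : (D_{E'}, \M) \to (D_{\R}, \M)$ is continuous by Proposition \ref{Con_Prop_Properties}(\ref{Con_Prop_Properties1}), and each evaluation map $\pi_{t_{i}} : (D_{\R}, \M) \to \R$ is continuous at every path continuous at $t_{i}$ by the classical M1 theory \cite[Thm.~12.5.1]{whitt2002}. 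Since $t_{i} \in \mathrm{cont}(\nu)$, the composition is continuous on a set of full $\nu$-measure, so the continuous mapping theorem applied to the convergent subsequence gives weak convergence of its finite-dimensional distributions to those of $\nu$. By the theorem's hypothesis these same finite-dimensional laws also converge to those of $\mu$, so $\mu$ and $\nu$ agree on all $k$-fold projections at times in $D$.

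Finally, I would invoke Proposition \ref{Con_Prop_Properties}(\ref{Con_Prop_Properties4}), which identifies $\mathfrak{B}(D_{E'})$ with the Kolmogorov $\sigma$-algebra $\mathcal{K}$ generated by all projections $\pi^{\phi_{1}, \dots, \phi_{k}}_{t_{1}, \dots, t_{k}}$. Because every element of $D_{E'}$ is right-continuous, for any $t \in [0,T]$ one has $\pi_{t} = \lim_{s \downarrow t, \, s \in D} \pi_{s}$ pointwise, so $\sigma(\pi_{s} : s \in D) = \sigma(\pi_{t} : t \in [0,T])$, and consequently $\mu$ and $\nu$ are determined by their $D$-indexed finite-dimensional distributions. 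Combining the previous step with this density argument yields $\nu = \mu$, completing the proof.

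The main obstacle I anticipate is the mismatch between the \emph{assumed} continuity set $\mathrm{cont}(\mu)$ and the \emph{unknown} continuity set $\mathrm{cont}(\nu)$: one cannot simply apply the continuous mapping theorem to $\nu$ at arbitrary $t_{i} \in \mathrm{cont}(\mu)$, and this is what forces the intersection $D$ and the subsequent right-continuity argument to pass from $D$ back to all of $[0,T]$. Everything else is essentially bookkeeping built on Theorem \ref{Char_Thm_Tightness} and Proposition \ref{Con_Prop_Properties}.
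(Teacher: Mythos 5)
Your proposal is correct and follows essentially the same route as the paper: the paper's proof simply observes that the Borel and Kolmogorov $\sigma$-algebras on $(D_{E'},\M)$ coincide (Proposition \ref{Con_Prop_Properties} (\ref{Con_Prop_Properties4})) and then invokes Mitoma's Proposition 5.1, which is precisely the subsequential-uniqueness argument you have written out in full (relative compactness via Theorem \ref{Char_Thm_Tightness}, identification of subsequential limits through finite-dimensional distributions at common continuity times, and right-continuity to pass from the dense set $D$ to the whole Kolmogorov $\sigma$-algebra). The only point worth a second glance is that extracting weakly convergent \emph{subsequences} (rather than subnets) from a relatively compact family on the non-metrizable space $(D_{E'},\M)$ deserves a word of justification, but this is exactly the content delegated to the cited results and does not affect the substance of your argument.
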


Before proving these results we need the following technical result, which exploits the nuclear space structure of $E$ to enable a generic subset $A \subseteq D_{E'}$ to be controlled in terms of its projection onto a finite number of basis vectors. For a normed linear space $X$, we will denote the \emph{M1 modulus of continuity on} $D_{X}$ by
\begin{equation}
\label{Con_eq_Modulus}
w_{X, \M}(x; \delta) := \sup_{t \in [0,T]} 
	\sup_{\mathrm{trip}(t;\delta)} \inf_{\lambda \in [0,1]} \left\Vert x_{t_{2}} - (1 - \lambda)x_{t_{1}} - \lambda x_{t_{3}} \right\Vert_{X},
	\qquad x \in D_{X}, \delta > 0,
\end{equation}
where $\mathrm{trip}(t;\delta) = \{(t_{1}, t_{2}, t_{3}) : \max(0, t - \delta) \leq t_{1} < t_{2} < t_{3} < \min(t+\delta, T)\}$.

\begin{lemma}[Controlling the modulus of continuity]
\label{Technical_Lem_MainLem}
Let $p>n$ be such that the inclusion $E_{p}\hookrightarrow E_{n}$ is Hilbert--Schmidt and let $A\subseteq D_{E'}$ be such that 
\[
c:=\sup_{x\in A}\sup_{t\in\left[0,T\right]}\left\Vert x_{t}\right\Vert _{-n}<\infty.
\]
Then, for every $\varepsilon>0$, there exists $k\geq1$
and $\phi_{1},\phi_{2},\ldots,\phi_{k} \in E$ such that 
\[
\sup_{x \in A} w_{E_{-p}, \mathrm{M1}} (x;\delta) \leq 2c \varepsilon^{-1} \!\! \max_{i=1,\ldots, k} \sup_{x\in A}w_{\R, \mathrm{M1}} (x(\phi_{i});\delta) + 3c\varepsilon,
  \qquad \textrm{for every } \delta >0.
\]
\end{lemma}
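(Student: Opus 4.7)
The plan is to exploit the Hilbert--Schmidt inclusion $E_{p}\hookrightarrow E_{n}$, which makes the unit ball $B_{p}:=\{\phi\in E_{p}:\|\phi\|_{p}\leq1\}$ relatively compact in $E_{n}$, so that the defining formula $\|f\|_{-p}=\sup_{\phi\in B_{p}}f(\phi)$ can be approximated by a maximum over a finite $\|\cdot\|_{n}$-net. The first step is to fix such a net $\phi_{1},\dots,\phi_{k}\in E$ for $B_{p}$, where density of $E$ in $E_{n}$ lets us keep the centres in $E$.

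The heart of the argument is a minimax reduction. For fixed $x\in A$ and triple $(t_{1},t_{2},t_{3})\in\mathrm{trip}(t;\delta)$, set $v_{\lambda}:=x_{t_{2}}-(1-\lambda)x_{t_{1}}-\lambda x_{t_{3}}$. Since $\|v_{\lambda}\|_{-n}\leq2c$ and $E_{-n}\hookrightarrow E_{-p}$ continuously, $v_{\lambda}\in E_{-p}$, so $\phi\mapsto v_{\lambda}(\phi)$ is weakly continuous on $B_{p}$; because $B_{p}$ is weakly compact (Banach--Alaoglu) and $\lambda\mapsto v_{\lambda}(\phi)$ is affine, Sion's minimax theorem applied to the bilinear map $(\lambda,\phi)\mapsto v_{\lambda}(\phi)$ on $[0,1]\times B_{p}$ gives
\[
\inf_{\lambda\in[0,1]}\|v_{\lambda}\|_{-p}\;=\;\inf_{\lambda}\sup_{\phi\in B_{p}}v_{\lambda}(\phi)\;=\;\sup_{\phi\in B_{p}}\min\bigl(v_{0}(\phi),\,v_{1}(\phi)\bigr),
\]
the first equality using symmetry of $B_{p}$. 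A direct case-by-case check in $\R$ shows $\min(v_{0}(\phi),v_{1}(\phi))\leq d_{\R}\bigl(x_{t_{2}}(\phi),[x_{t_{1}}(\phi),x_{t_{3}}(\phi)]\bigr)$, whose supremum over triples is $w_{\R,\mathrm{M1}}(x(\phi);\delta)$. Swapping the two suprema produces
\[
w_{E_{-p},\mathrm{M1}}(x;\delta)\;\leq\;\sup_{\phi\in B_{p}}w_{\R,\mathrm{M1}}(x(\phi);\delta).
\]

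Finally I would replace the supremum over $B_{p}$ by a maximum over the net. For $\phi\in B_{p}$ with $\|\phi-\phi_{i}\|_{n}\leq\varepsilon$, the affine function $\lambda\mapsto x_{t_{2}}(\phi-\phi_{i})-(1-\lambda)x_{t_{1}}(\phi-\phi_{i})-\lambda x_{t_{3}}(\phi-\phi_{i})$ has absolute value bounded on $[0,1]$ by its endpoint values, each at most $2\sup_{t}|x_{t}(\phi-\phi_{i})|\leq2c\varepsilon$. Evaluating at a $\lambda$ that minimises the $x(\phi_{i})$-part of the integrand then yields the quasi-subadditivity
\[
w_{\R}(x(\phi);\delta)\leq w_{\R}(x(\phi_{i});\delta)+2c\varepsilon,
\]
so maximising over $i$ and supping over $x\in A$ produces a bound of the form $\max_{i}\sup_{A}w_{\R}(x(\phi_{i});\delta)+2c\varepsilon$. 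In the relevant regime $\varepsilon\leq2c$, this is stronger than the statement because $2c\varepsilon^{-1}\geq1$ absorbs the coefficient on $\max_{i}$ and $3c\varepsilon\geq2c\varepsilon$ absorbs the constant term.

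The main obstacle is the Sion-minimax step: $B_{p}$ is not norm-compact in infinite dimensions, so one must endow it with the weak topology of $E_{p}$ and verify weak continuity of $\phi\mapsto v_{\lambda}(\phi)$ on $B_{p}$; both rest on $v_{\lambda}\in E_{-p}$, which is where the Hilbert--Schmidt chain $E_{-n}\hookrightarrow E_{-p}$ (and hence the nuclear-space hypothesis) enters essentially, together with its use in producing the $\|\cdot\|_{n}$-net of $B_{p}$.
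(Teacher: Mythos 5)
Your argument is correct, and it takes a genuinely different route from the paper's. The paper first truncates onto the first $M$ vectors of an orthonormal basis of $E_{p}$, using the Hilbert--Schmidt summability $\sum_{i>M}\Vert e^{p}_{i}\Vert_{n}^{2}\leq\varepsilon$ to control the tail, and then runs a Heine--Borel covering argument (Lemma \ref{Technical_Lem_Geometric}) on pairs in the unit ball of $\R^{M}$ to extract finitely many directions $\theta_{i}$; the test functions are $\phi_{i}=\sum_{j}\theta_{i}^{(j)}e_{j}^{p}$ and the price of the covering is the multiplicative factor $2c\varepsilon^{-1}$. You instead use duality in the Hilbert space $E_{-p}$: your minimax identity is precisely the Hahn--Banach/Riesz formula for the distance from $x_{t_{2}}$ to the segment $S=[x_{t_{1}},x_{t_{3}}]_{E'}$, namely $\mathrm{dist}_{E_{-p}}(z,S)=\sup_{\Vert\phi\Vert_{p}\leq1}\inf_{w\in S}(z-w)(\phi)$, so Sion's theorem can be replaced by this elementary fact if you prefer; only the direction ``$\leq$'' is needed, and the weak continuity you flag is indeed guaranteed by $v_{\lambda}\in E_{-n}\subseteq E_{-p}$. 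This yields the clean intermediate inequality $w_{E_{-p},\M}(x;\delta)\leq\sup_{\phi\in B_{p}}w_{\R,\M}(x(\phi);\delta)$, after which relative compactness of $B_{p}$ in $E_{n}$ --- all you actually use of the Hilbert--Schmidt hypothesis --- gives the finite net and the additive error $2c\varepsilon$; your net argument also delivers Corollary \ref{Technical_Cor_IncrementControl} directly by applying it to a degenerate segment. Two small points. First, your final bound $\max_{i}\sup_{A}w_{\R,\M}(x(\phi_{i});\delta)+2c\varepsilon$ implies the stated one only when $2c\varepsilon^{-1}\geq1$; since that is the only regime that matters downstream (the lemma is applied with $\delta\to0$ first and then $\varepsilon\to0$), you should either restate the conclusion in your sharper additive form or note explicitly that proving the lemma for small $\varepsilon$ suffices. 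Second, the approximation of the net centres by elements of $E$ changes $\Vert\phi_{i}\Vert_{n}$ and the error constant by harmless amounts, but this should be said. In exchange for invoking slightly heavier functional-analytic machinery than the paper's self-contained finite-dimensional geometry, your route buys a constant-free projection inequality and shows that mere compactness of the inclusion $E_{p}\hookrightarrow E_{n}$, rather than the Hilbert--Schmidt property, is enough.
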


The proof is a technical computation for which the next lemma will be helpful. This argument is just a repackaging of the Heine--Borel theorem and is adapted from \cite[Lem.~A.28]{HolleyStroock1979}. 

\begin{lemma}[A geometric argument]
\label{Technical_Lem_Geometric}
Fix $M \in \N$ and $\varepsilon>0$, and let $B$ be the closed unit ball in $\R^{M}$. Then there exists a finite set $\Theta=\left\{ \theta_{1},\ldots,\theta_{k}\right\} \subseteq \R^{M}$ such that $\left\Vert \theta_{i} \right\Vert _{\R^{M}}=1$, and for all $v_{1},v_{2}\in B$ satisfying 
\[
\min_{\lambda \in \left[0,1\right]} \left\Vert \lambda v_{1} + \left( 1-\lambda \right) v_{2} \right\Vert _{\R^{M}} \geq \varepsilon,
\]
there is an $i \in \left\{ 1, 2, \ldots, k \right\}$ for which 
\[
\left\Vert \lambda v_{1} + \left(1-\lambda\right)v_{2} \right\Vert _{\mathbb{R}^{M}}
  \leq 2 \varepsilon^{-1} \left|\left( \lambda v_{1} + \left(1-\lambda\right) v_{2} \right) \cdot \theta_{i} \right|,
\]
for every $\lambda \in \left[0,1\right]$. Here, \textup{\emph{$\left\Vert \cdot\right\Vert _{\mathbb{R}^{M}}$
is the Euclidean norm and $k$ depends on $M$ and $\varepsilon$.}}\textup{ }
\end{lemma}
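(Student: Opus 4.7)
The plan is to pass through a continuously-varying ideal direction $\theta^{\ast} = \theta^{\ast}(v_{1}, v_{2}) \in S^{M-1}$ and then discretise over the unit sphere via a finite $(\varepsilon/2)$-net.

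For the first step, given any admissible pair $(v_{1}, v_{2})$, let $P = P(v_{1}, v_{2})$ denote the point of the segment $[v_{1}, v_{2}]$ closest to the origin in Euclidean norm; by hypothesis $\|P\|_{\R^{M}} \geq \varepsilon$. Set $\theta^{\ast} := P / \|P\|_{\R^{M}}$. The variational characterization of the Euclidean projection onto the convex set $[v_{1},v_{2}]$ gives $\langle v - P, -P \rangle \leq 0$ for every $v \in [v_{1}, v_{2}]$, i.e.\ $v \cdot \theta^{\ast} \geq \|P\|_{\R^{M}} \geq \varepsilon$. In particular, every point $w = \lambda v_{1} + (1-\lambda) v_{2}$ on the segment satisfies $w \cdot \theta^{\ast} \geq \varepsilon$; this is the crucial \emph{uniform-in-}$\lambda$ lower bound.

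For the second step, by compactness of $S^{M-1}$ there is a finite set $\Theta = \{\theta_{1}, \ldots, \theta_{k}\} \subseteq S^{M-1}$ that is an $(\varepsilon/2)$-net in Euclidean norm, with $k$ depending only on $M$ and $\varepsilon$. For any admissible $(v_{1}, v_{2})$, pick $\theta_{i} \in \Theta$ with $\|\theta_{i} - \theta^{\ast}\|_{\R^{M}} \leq \varepsilon/2$. Since $B$ is convex and contains $v_{1}, v_{2}$, every $w$ on the segment satisfies $\|w\|_{\R^{M}} \leq 1$, so
\[
w \cdot \theta_{i} = w \cdot \theta^{\ast} + w \cdot (\theta_{i} - \theta^{\ast}) \geq \varepsilon - \|w\|_{\R^{M}} \cdot \tfrac{\varepsilon}{2} \geq \tfrac{\varepsilon}{2},
\]
and therefore $\|w\|_{\R^{M}} \leq 1 \leq 2\varepsilon^{-1} |w \cdot \theta_{i}|$, which is the desired inequality.

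The only conceptually nontrivial ingredient is the first step: one needs a single direction in which \emph{every} point of the segment has large projection. This is delivered essentially for free by the projection-onto-a-convex-set inequality applied to the nearest point of $[v_{1},v_{2}]$ to the origin. The rest is a routine $\varepsilon$-net argument on the sphere, together with the elementary observation that the perturbation by $\theta_{i} - \theta^{\ast}$ is controlled by $\|w\|_{\R^{M}} \leq 1$.
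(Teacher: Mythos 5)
Your proof is correct and rests on the same key idea as the paper's own argument: take $\theta^{\ast}$ to be the unit vector in the direction of the point of $[v_{1},v_{2}]$ nearest the origin, so that the variational inequality for projection onto a convex set gives the uniform-in-$\lambda$ bound $w\cdot\theta^{\ast}\geq\varepsilon$ for every $w$ on the segment. The only difference is the discretisation step --- you take an $(\varepsilon/2)$-net of the unit sphere and perturb $\theta^{\ast}$ to a net point, whereas the paper extracts a finite subcover of the compact set of admissible pairs $(v_{1},v_{2})$ --- and both routes cost the same factor of $2$; yours is slightly more explicit (it bounds $k$ by the covering number of the sphere at scale $\varepsilon/2$) and avoids having to check that the paper's open neighbourhoods work uniformly in $\lambda$.
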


\begin{proof}
For $v_{1},v_{2}\in B$, let $\lambda_{0}=\lambda_{0}\left(v_{1},v_{2}\right)$ be a minimiser of 
\[
\lambda\mapsto\left\Vert \lambda v_{1}+\left(1-\lambda\right)v_{2}\right\Vert _{\mathbb{R}^{M}}
\]
on $\left[0,1\right]$. Define 
\[
S=\left\{ \left(v_{1},v_{2}\right)\in B\times B:\left\Vert \lambda_{0}v_{1}+\left(1-\lambda_{0}\right)v_{2}\right\Vert _{\mathbb{R}^{M}}\geq\varepsilon\right\},
\]
then for $\left(v_{1},v_{2}\right)\in S$ set $\theta\left(v_{1},v_{2}\right)$
to be the unit vector in the direction of the pair's minimiser: 
\[
\theta\left(v_{1},v_{2}\right)=\frac{\lambda_{0}v_{1}+\left(1-\lambda_{0}\right)v_{2}}{\left\Vert \lambda_{0}v_{1}+\left(1-\lambda_{0}\right)v_{2}\right\Vert _{\mathbb{R}^{M}}},
\]
so that for every $\lambda\in\left[0,1\right]$
\begin{align*}
\left(\lambda v_{1}+\left(1-\lambda\right)v_{2}\right)\cdot\theta\left(v_{1},v_{2}\right)\geq\varepsilon & \geq\varepsilon\left\Vert \lambda v_{1}+\left(1-\lambda\right)v_{2}\right\Vert _{\mathbb{R}^{M}}
 >\frac{\varepsilon}{4}\left\Vert \lambda v_{1}+\left(1-\lambda\right)v_{2}\right\Vert _{\mathbb{\mathbb{R}}^{M}}.
\end{align*}
Hence there is an open neighbourhood, $N\!\left(v_{1},v_{2}\right)$,
in $\mathbb{R}^{M}\times\mathbb{R}^{M}$ about $\left(v_{1},v_{2}\right)$
such that for all $\left(w_{1},w_{2}\right)\in N\!\left(v_{1},v_{2}\right)$
\[
\frac{\varepsilon}{2} \left\Vert \lambda w_{1}+\left(1-\lambda\right)w_{2}\right\Vert _{\mathbb{\mathbb{R}}^{M}}\leq\left(\lambda w_{1}+\left(1-\lambda\right)w_{2}\right)\cdot\theta\left(v_{1},v_{2}\right).
\]
Now $S$ is a compact set with open cover $\left\{ N\!\left(v_{1},v_{2}\right)\right\} _{\left(v_{1},v_{2}\right)\in S}$, therefore it is possible to take a finite sub-cover, $\{ N\!\left(v_{1}^{i},v_{2}^{i}\right)\} _{i=1,\ldots, k}$. To complete the proof let $\Theta=\{ \theta\left(v_{1}^{i},v_{2}^{i}\right)\} _{i=1,\ldots, k}$. 
\end{proof}

\begin{proof}[Proof of Lemma \ref{Technical_Lem_MainLem}]
For readability, introduce the notation
\[
y^{\lambda} 
  = y (\lambda;x;t_{1},t_{2},t_{3}) 
  := x_{t_{2}}-\lambda x_{t_{1}}-(1-\lambda)x_{t_{3}}
  \in E',
\]
where $\lambda \in [0,1]$, $x \in D_{E'}$ and $t_{1},t_{2},t_{3} \in [0,T]$. Then, recalling the notation of (\ref{Con_eq_Modulus}),
\[
w_{E_{-p},\mathrm{M1}} (x;\delta) 
 = \sup_{t,\mathrm{trip}(t;\delta)} \inf_{\lambda \in [0,1]} \bigl\Vert y(\lambda; x; t_{1},t_{2},t_{3}) \bigr\Vert _{-p},
\]
and the triangle inequality gives that $\Vert y^{\lambda} \Vert_{-n}  \leq c$, whenever $x \in A$.

To construct the finite family of vectors, first notice that $E_{-n} \subseteq E_{-p}$, so for an orthonormal basis, $\left\{ e^{p}_{i} \right\}_{i \geq 1}$, of $E_{p}$
\[
\bigl| y^{\lambda}(e^{p}_{i}) \bigr| 
  \leq \bigl\Vert y^{\lambda} \bigr\Vert_{-n} \bigl\Vert e^{p}_{i} \bigr\Vert_{n}
  \leq c\bigl\Vert e^{p}_{i} \bigr\Vert_{n},
\]
for  $x \in A$. Therefore if $M$ is chosen large enough so that $\sum_{i = M + 1}^{\infty} \bigl\Vert e^{p}_{i} \bigr\Vert_{n}^{2} \leq \varepsilon,$ then 
\begin{equation}
\label{eq:Technical_Main1}
\bigl\Vert y^{\lambda} \bigr\Vert_{-p}^{2} 
  \leq \sum_{i = 1}^{M} \bigl| y^{\lambda}(e^{p}_{i}) \bigr|^{2}
  + c \varepsilon,
\end{equation}
so it now suffices to work with $e_{1}^{p}, e_{2}^{p} \dots, e_{M}^{p}$. By introducing the $\R^{M}$ vectors
\[
v_{1} :=
\left(\begin{array}{c}
\left(x_{t_{2}}-x_{t_{1}}\right)\left(e_{1}^{p}\right)\\
\left(x_{t_{2}}-x_{t_{1}}\right)\left(e_{2}^{p}\right)\\
\vdots\\
\left(x_{t_{2}}-x_{t_{1}}\right)\left(e_{M}^{p}\right)
\end{array}\right)
\qquad \textrm{and} \qquad
v_{2}:=\left(\begin{array}{c}
\left(x_{t_{2}}-x_{t_{3}}\right)\left(e_{1}^{p}\right)\\
\left(x_{t_{2}}-x_{t_{3}}\right)\left(e_{2}^{p}\right)\\
\vdots\\
\left(x_{t_{2}}-x_{t_{3}}\right)\left(e_{M}^{p}\right)
\end{array}\right),
\]
the bound in (\ref{eq:Technical_Main1}) gives
\[
w_{E_{-p},\mathrm{M1}} (x;\delta) 
  \leq \sup_{t,\mathrm{trip}(t;\delta)} \inf_{\lambda \in [0,1]} \bigl\Vert \lambda v_{1} + (1-\lambda) v_{2} \bigr\Vert _{\R^{M}}
   + c \varepsilon.
\]

With this fixed choice of $\varepsilon$ and $M$, take $\Theta = \{\theta_{1}, \theta_{2}, \dots, \theta_{k}\}$ from Lemma \ref{Technical_Lem_Geometric}. If it is the case that
\begin{equation}
\label{eq:Technical_Dichotomy}
\inf_{\lambda \in [0,1]} \bigl\Vert \lambda v_{1} + (1-\lambda) v_{2} \bigr\Vert _{\R^{M}} \geq 2c\varepsilon
\end{equation}
then, by normalising $v_{1}$ and $v_{2}$ by $2c$, it follows that there is some $i \in \{1,2,\dots,k\}$ such that
\[
\inf_{\lambda \in [0,1]} \bigl\Vert \lambda v_{1} + (1-\lambda) v_{2} \bigr\Vert _{\R^{M}}
  \leq 2c \varepsilon^{-1} \inf_{\lambda \in [0,1]} \bigl| \theta_{i} \cdot (\lambda v_{1} + (1 - \lambda) v_{2} ) \bigr|.
\]
If (\ref{eq:Technical_Dichotomy}) fails to be true, then clearly the upper bound above can be replaced by $2c\varepsilon$. Hence, for every $x \in A$
\begin{equation}
\label{eq:Technical_Penultimate}
w_{E_{-p},\mathrm{M1}} (x;\delta) 
  \leq 2c\varepsilon^{-1} \!\!\! \sup_{t,\mathrm{trip}(t;\delta)} \max_{i = 1,\dots,k} \inf_{\lambda \in [0,1]} \bigl| \theta_{i} \cdot (\lambda v_{1} + (1 - \lambda) v_{2} ) \bigr|
  + 3c\varepsilon.
\end{equation}

For each $i \in \{1,2,\dots,k\}$, $\phi_{i}$ can now be constructed by defining
\[
\phi_{i} 
  := \sum_{j = 1}^{M} \theta^{(j)}_{i} e_{j}^{p} \in E_{p}, 
\]
where $\theta^{(j)}_{i} \in \R$ is the $j^{\textrm{th}}$ coordinate of $\theta_{i}$. Inequality (\ref{eq:Technical_Penultimate}) therefore reduces to  
\[
w_{E_{-p},\mathrm{M1}} (x;\delta) 
  \leq 2c\varepsilon^{-1} \!\!\!\!\! \sup_{t,\mathrm{trip}(t;\delta)} \max_{i = 1,\dots,k} \inf_{\lambda \in [0,1]} \bigl| y^{\lambda}(\phi_{i}) \bigr|
  + 3c\varepsilon
  \leq 2c\varepsilon^{-1} \!\!\! \max_{i = 1,\dots,k} \! w_{\R, \mathrm{M1}}(x(\phi_{i};\! \delta))
  + 3c\varepsilon
\]
where the second inequality is due to switching the maximum and supremum.  Taking a supremum over $x \in A$ and repeating the switch once more yields the result. 
\end{proof}

By taking $\lambda = 1$ in the above proof, the corresponding result for the increments of $x$ is given:

\begin{corollary}[Controlling increments]
\label{Technical_Cor_IncrementControl}
With $x$, $A$, $c$, $\varepsilon$ and $\phi_{1}, \phi_{2}, \dots, \phi_{k}$ as in the statement of Lemma \ref{Technical_Lem_MainLem}, 
\[
\sup_{x \in A}\sup_{t \in (s-\delta, s+\delta) \cap [0,T]} \left\Vert x_{t} - x_{s}\right\Vert_{E_{-p}}
  \leq 2c\varepsilon^{-1} \max_{i=1, \dots, k} \sup_{x \in A} \sup_{t \in (s-\delta, s+\delta) \cap [0,T]} \left| x_{t}(\phi_{i}) - x_{s}(\phi_{i}) \right| 
  + 3c\varepsilon,
\]
for every $\delta > 0$ and $s \in [0,T]$. 
\end{corollary}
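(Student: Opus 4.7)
The plan is to rerun the proof of Lemma \ref{Technical_Lem_MainLem} with the free parameter $\lambda$ pinned at $1$. Under this specialisation the auxiliary vector $y^{\lambda} = x_{t_{2}} - \lambda x_{t_{1}} - (1-\lambda) x_{t_{3}}$ collapses to the increment $x_{t_{2}} - x_{t_{1}}$, the time $t_{3}$ disappears entirely, and the infimum over $\lambda \in [0,1]$ becomes a trivial evaluation. Relabelling $t_{2} = t$ and $t_{1} = s$ then produces precisely the quantity on the left-hand side of the corollary.

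Concretely, for $x \in A$ and $t \in (s-\delta, s+\delta) \cap [0,T]$ I would set $z := x_{t} - x_{s}$ and note $\left\Vert z \right\Vert_{-n} \leq 2c$ by the triangle inequality. The same Hilbert--Schmidt truncation as in the lemma --- choosing $M$ so that $\sum_{i > M} \left\Vert e^{p}_{i} \right\Vert_{n}^{2} \leq \varepsilon$ --- reduces the control of $\left\Vert z \right\Vert_{-p}$ to the control of the finite-dimensional projection $\tilde{z} := (z(e^{p}_{1}), \dots, z(e^{p}_{M})) \in \R^{M}$, up to an additive error of order $c \varepsilon$ exactly as in (\ref{eq:Technical_Main1}).

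To pass from $\left\Vert \tilde{z} \right\Vert_{\R^{M}}$ to inner products against a fixed finite family of directions, I would apply Lemma \ref{Technical_Lem_Geometric} with the degenerate choice $v_{1} = v_{2} = \tilde{z}/(2c)$, which lies in the closed unit ball of $\R^{M}$ by the $\left\Vert z \right\Vert_{-n}$ bound. Every convex combination $\lambda v_{1} + (1-\lambda) v_{2}$ then reduces to the single vector $\tilde{z}/(2c)$ independently of $\lambda$, so the dichotomy is just: either $\left\Vert \tilde{z} \right\Vert_{\R^{M}} < 2c\varepsilon$ (absorbed into the additive error), or the geometric lemma supplies $\theta_{i} \in \Theta$ with $\left\Vert \tilde{z} \right\Vert_{\R^{M}} \leq 2\varepsilon^{-1} |\tilde{z} \cdot \theta_{i}|$.

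Finally, defining $\phi_{i} := \sum_{j=1}^{M} \theta_{i}^{(j)} e^{p}_{j} \in E$ exactly as in the lemma translates $\tilde{z} \cdot \theta_{i}$ into $z(\phi_{i}) = x_{t}(\phi_{i}) - x_{s}(\phi_{i})$, and taking suprema over $x \in A$ and $t \in (s-\delta, s+\delta) \cap [0,T]$ produces the stated inequality. I do not anticipate any real obstacle: the only conceptual step is recognising that the degenerate choice $v_{1} = v_{2}$ is what encodes the $\lambda = 1$ specialisation and collapses the three-time-point modulus into the two-time-point increment, after which the remainder is routine bookkeeping of the same constants that appeared in Lemma \ref{Technical_Lem_MainLem}.
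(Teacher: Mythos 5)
Your proposal is correct and is essentially the paper's own argument: the paper disposes of the corollary with the single remark that one takes $\lambda=1$ in the proof of Lemma \ref{Technical_Lem_MainLem}, and your write-up just makes that specialisation explicit (your degenerate choice $v_{1}=v_{2}=\tilde{z}/(2c)$ in Lemma \ref{Technical_Lem_Geometric} is a clean way to see that the same $\Theta$, hence the same $\phi_{1},\dots,\phi_{k}$, still work). The only blemishes --- e.g.\ absorbing the tail term $\sqrt{c\varepsilon}$-type contributions into $3c\varepsilon$ --- are inherited verbatim from the paper's own bookkeeping in Lemma \ref{Technical_Lem_MainLem} and are not introduced by you.
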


\begin{proof}[Proof of Theorem \ref{Char_Thm_Compactness}]
(\ref{Char_Thm_Compactness1}) $\Rightarrow$ (\ref{Char_Thm_Compactness2}) and (\ref{Char_Thm_Compactness3}) $\Rightarrow$ (\ref{Char_Thm_Compactness1}) follow from Proposition \ref{Con_Prop_Properties} (\ref{Con_Prop_Properties1}) and (\ref{Con_Prop_Properties2}). We now prove (\ref{Char_Thm_Compactness2}) $\Rightarrow$ (\ref{Char_Thm_Compactness3}).

The first half of the proof of \cite[Thm.~2.4.4]{kallianpur1995} does not depend on the choice of temporal topology, hence we have $p > n$ for which $A \subseteq D_{E_{-n}} \subseteq D_{E_{-p}}$, $E_{p} \hookrightarrow E_{n}$ is Hilbert--Schmidt and
\[
c := \sup_{x \in A} \sup_{t \in [0,T]} \left\Vert x_{t} \right\Vert_{-n}
 < \infty.
\]
Therefore $A$ will be compact in $(D_{E_{-p}}, \M)$ if we can verify the second condition in \cite[Thm.~12.12.2]{whitt2002} (with $D_{E_{-p}}$ as the range space).

Let $\phi_{1}, \phi_{2}, \dots, \phi_{k} \in E$ be as in the conclusion of Lemma \ref{Technical_Lem_MainLem}. Since each $\pi^{\phi_{i}}(A)$ is compact (by hypothesis), \cite[Thm.~12.12.2]{whitt2002} implies (recall the notation of (\ref{Con_eq_Modulus}))
\[
\max_{i=1,2,\dots,k} \sup_{x \in A} w_{\R,\M}(x(\phi_{i}); \delta)
	\to 0, \qquad \textrm{as }  \delta \to 0.
\]
Therefore by Lemma \ref{Technical_Lem_MainLem}, $\limsup_{\delta \to 0} \sup_{x \in A} w_{E_{-p}, \M}(x; \delta) \leq 3c\varepsilon$, and likewise for the terms in Corollary \ref{Technical_Cor_IncrementControl}. Since $\varepsilon > 0$ is arbitrary, we are done.
\end{proof}

\begin{proof}[Proof of Theorem \ref{Char_Thm_Tightness}]
To prove the second statement, assume that $(\mu_{n})$ is tight on $(D_{E'}, \M)$. For every $p\geq 0$, $D_{E_{-p}}$ is a Polish space, so \cite[Sec.~3, Def.~2, Ex.~1]{Smolyanov1976} implies that $D_{E_{-p}}$ is a topological Radon space. From \cite[Sec.~3, Ex.~4]{Smolyanov1976} $(D_{E'}, \mathrm{M1})$ is a topological Radon space, hence every probability measure on $(D_{E'}, \mathrm{M1})$ is a Radon measure. By Theorem \ref{Char_Thm_Compactness} (\ref{Char_Thm_Compactness3}), every compact subset of $(D_{E'}, \M)$ is metrizable, therefore \cite[Sec.~5, Thm.~2]{Smolyanov1976} completes the proof of the second result, since $(D_{E'}, \mathrm{M1})$ is completely regular (Proposition \ref{Con_Prop_Properties} (\ref{Con_Prop_Properties3})). 

The first part of the theorem follows from the work in the proof of Theorem \ref{Char_Thm_Compactness} and \cite[Thm.~2.5.1]{kallianpur1995}.
\end{proof}

\begin{proof}[Proof of Theorem \ref{Char_Thm_WeakConvergence}]
Since the Borel and Kolmogorov $\sigma$-algebras on $(D_{E'}, \M)$ coincide (Proposition \ref{Con_Prop_Properties} (\ref{Con_Prop_Properties4})), the result follows by \cite[Prop.~5.1]{mitoma83Tight}.
\end{proof}


\section{Application to empirical processes with Dirichlet boundary conditions}
\label{Sect_Application}

In the remainder of the paper we will show how our machinery can be applied to the problem of approximating stochastic evolution equations through empirical averages of microscopic particles. We will be very concrete and consider a problem from mathematical finance, specifically large portfolio credit modelling. Our analysis will show that $(D_{E'}, \M)$ can be a convenient space on which to prove tightness when studying systems with Dirichlet boundary conditions. 

Define a collection of correlated Brownian motions, $\{X^{i,N}\}_{i=1,\dots, N}$, started on the half-line and evolving with the dynamics
\begin{equation}
X^{i,N}_{t} 
	= X^{i}_{0}
	+ \int^{t}_{0} \rho(L^{N}_{s}) dW_{s}
	+ \int^{t}_{0} \sqrt{1 - \rho(L^{N}_{s})^{2}} dW^{i}_{s},
		\qquad L^{N}_{t} := \frac{1}{N} \sum_{i=1}^{N} \mathbb{1}_{\tau^{i,N} \leq t},
\end{equation}
where $\tau^{i,N} := \inf\{t > 0: X^{i,N}_{t} \leq 0\}$. Here, $W, W^{1}, W^{2}, \dots$ are independent Brownian motions, $\{X^{i}\}_{i \geq 1}$ are i.i.d.\ with some density $f : (0,\infty) \to [0,\infty)$ and $\rho : [0,1] \to [0,1]$ is a measurable function. We will not impose any further regularity constraints on $\rho$ in the following tightness calculations. This is a system in which the proportion of particles that have hit the origin determines the correlation in the system. 

(To see that such $\{X^{i,N}\}_{i=1,\dots,N}$ exist, notice that $t \mapsto L^{N}_{t}$ is piecewise constant. Therefore, to construct the discrete system, take $N$ Brownian motions with initial correlation $\rho(0)$, stop the system at the first hitting of zero, restart the system with correlation $\rho(1/N)$ and repeat.)
 
Our set-up extends the constant correlation model introduced in \cite{bush11}. The motivation for this particular form is to address the correlation skew seen in \cite[Sec.~5]{bush11}. To analyse the model, the quantity of interest is the empirical measure of the population:
\begin{equation}
\nu^{N}_{t} = \frac{1}{N} \sum_{i = 1}^{N} \mathbb{1}_{t < \tau^{i,N}} \delta_{X^{i,N}_{t}}, 
	\qquad \textrm{(hence } L^{N}_{t} = 1 - \nu^{N}_{t}(0,\infty)\textrm{)},
\end{equation}
where $\delta_{x}$ is the usual Dirac delta measure at the point $x \in \R$. 

We would like to establish the weak convergence (at the process level) of $(\nu^{N})_{N \geq 1}$ to some limit $\nu$, which should be the solution of the non-linear evolution equation
\begin{equation}
\label{Ex_Eq_EvoEqu}
d\nu_{t}(\phi) = \frac{1}{2} \nu_{t}(\phi'')dt + \rho(L_{t}) \nu_{t}(\phi')dW_{t},
\qquad L_{t} = 1 - \nu_{t}(0,\infty).
\end{equation}
with test functions $\phi \in \S$ that satisfy $\phi(0) = 0$. This is an example of a (stochastic) McKean--Vlasov equation \cite{sznitman1991}. Proving existence and uniqueness of solutions to this equation would require further regularity constraints on $\rho$. For now, we will only demonstrate that $(\nu^{N})_{N \geq 1}$ is tight on the space $(D_{\Sdual}, \M)$, where \Sdual\ is the space of tempered distributions. 

Notice that, for every $t$, $\nu_{t}^{N}$ is a sub-probability measure, so is an element of \Sdual, and for every $\phi$, $\nu_{t}^{N}(\phi)$ is a real-valued \cadlag\ function. Therefore $\nu^{N}$ has a version that is \cadlag, by \cite{mitoma83Cont}, so $D_{\Sdual}$ is an appropriate space to work with. By Theorem \ref{Char_Thm_Tightness}, it suffices to show $\nu^{N}(\phi)$ is tight in $(D_{\R}, \M)$ for every $\phi \in \S$, and for that it is sufficient to verify \cite[Thm.~12.12.3]{whitt2002}, the first condition of which is trivial since $|\nu^{N}_{t}(\phi)| \leq \Vert \phi\Vert_{\infty}$. For demonstrating that the second condition of \cite[Thm.~12.12.3]{whitt2002} holds, we can employ the helpful result \cite[Thm.~1]{Avram1989}, so to summarise what is now required:

\begin{proposition}
\label{Ex_Prop_Summary}
The sequence $(\nu^{N})_{N \geq 1}$ is tight in $(D_{\Sdual}, \M)$ if, for every fixed $\phi \in \S$, there exists $a, b, c > 0$ such that 
\[
\P(H_{\R}(\nu^{N}_{t_{1}}(\phi), \nu^{N}_{t_{2}}(\phi), \nu^{N}_{t_{3}}(\phi)) \geq \eta)
	\leq c \eta^{-a} |t_{3} - t_{1}|^{1 + b}
\]
for all $N \geq 1$, $\eta > 0$, and $0 \leq t_{1} < t_{2} < t_{3} \leq T$, and 
\[
\lim_{N \to \infty} \P(\sup_{t \in (0,\delta)} |\nu^{N}_{t}(\phi)-\nu^{N}_{0}(\phi)| + \sup_{t \in (T-\delta, T)} |\nu^{N}_{T}(\phi)-\nu^{N}_{t}(\phi)| \geq \eta) = 0,
\qquad \textrm{for every } \eta > 0.
\]
Here, $H_{\R}(v_{1}, v_{2}, v_{3}) := \inf_{\lambda \in [0,1]} |v_{2} - (1-\lambda)v_{1} + \lambda v_{3}|$.
\end{proposition}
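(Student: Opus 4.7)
The plan is to chain together the three reduction results already mentioned in the paragraphs preceding the proposition. First, by Theorem \ref{Char_Thm_Tightness} the tightness of $(\nu^{N})_{N \geq 1}$ on $(D_{\Sdual}, \M)$ is equivalent to the tightness of its real-valued projections $(\nu^{N}(\phi))_{N \geq 1}$ on $(D_{\R}, \M)$ for every $\phi \in \S$. This is the only place in the argument where the distribution-valued M1 theory developed in Sections \ref{Sect_Construction} and \ref{Sect_Characterisation} is actually used, and it immediately reduces the problem to a family of tightness questions for real-valued \cadlag\ processes indexed by $\phi$.

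For each fixed $\phi \in \S$, I would then verify the two conditions of the M1 tightness criterion \cite[Thm.~12.12.3]{whitt2002} for $D_{\R}$. The compact-containment condition is trivial from the uniform bound $|\nu^{N}_{t}(\phi)| \leq \left\Vert \phi \right\Vert_{\infty}$ already noted above. The oscillation condition splits into an interior part (control of the M1 modulus of continuity $w_{\R,\M}(\nu^{N}(\phi);\delta)$ as $\delta \to 0$) and an endpoint part (control of the oscillation on the small intervals $(0,\delta)$ and $(T-\delta,T)$). The endpoint part of \cite[Thm.~12.12.3]{whitt2002} coincides verbatim with the second displayed hypothesis of the proposition, so no further work is required there.

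The interior part would be handled by invoking \cite[Thm.~1]{Avram1989}, which converts a tail estimate of the form $\P(H_{\R}(x_{t_{1}}, x_{t_{2}}, x_{t_{3}}) \geq \eta) \leq c \eta^{-a} |t_{3} - t_{1}|^{1+b}$ into the required in-probability control on $w_{\R,\M}(x^{N};\delta)$ as $\delta \to 0$, uniformly in the sequence index. This tail bound is precisely the first displayed hypothesis of the proposition. Combining Theorem \ref{Char_Thm_Tightness}, the trivial bound $|\nu^{N}_{t}(\phi)| \leq \left\Vert \phi \right\Vert_{\infty}$, the Avram--Taqqu conversion, and the endpoint hypothesis then gives the result.

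The only potential obstacle is bookkeeping: one must check that the quantifiers (over $\phi$, over $N$, over $\eta$, over triples $t_{1} < t_{2} < t_{3}$, and over $\delta$) are arranged so that the constants $a, b, c$ supplied by \cite[Thm.~1]{Avram1989} produce a modulus estimate uniform in $N$ in the precise form required by \cite[Thm.~12.12.3]{whitt2002}. Once this is confirmed, the proof is essentially a one-line concatenation of the three citations.
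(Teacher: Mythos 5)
Your argument is exactly the one the paper uses: Theorem \ref{Char_Thm_Tightness} reduces the problem to the real-valued projections $\nu^{N}(\phi)$, the uniform bound $|\nu^{N}_{t}(\phi)| \leq \Vert \phi \Vert_{\infty}$ disposes of the compact-containment condition of \cite[Thm.~12.12.3]{whitt2002}, and \cite[Thm.~1]{Avram1989} converts the $H_{\R}$ tail estimate into the required oscillation control. No gaps; this matches the paper's reasoning.
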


The challenge in working with $(\nu^{N})_{N \geq 1}$ is the discontinuity presented by the absorbing boundary at the origin. For the constant $\rho$ case, the authors of \cite{bush11} use explicit estimates based on 2d Brownian motion in a wedge \cite{iyengar1985, metzler2010} to control for the boundary effects. With the more complicated interactions in our present model such methods seem intractable, however, the M1 topology provides an alternative approach.

Introduce the process $\bar{\nu}^{N} \in D_{\Sdual}$ defined by
\[
\bar{\nu}^{N}_{t}
	:= \frac{1}{N} \sum_{i=1}^{N} \delta_{X^{i,N}_{t \wedge \tau^{i,N}}}.
\]
This has the advantage of being continuous (in time), so its increments are easy to control, and it can be related to $\nu^{N}$
through the simple fact
\begin{equation}
\label{Ex_eq_Decom}
\bar{\nu}^{N}_{t}(\phi)
	= \nu^{N}_{t}(\phi) + \phi(0)L^{N}_{t},
	\qquad \textrm{for every } \phi \in \S.
 \end{equation}
Thus, $\nu^{N}$ is a linear combination of a process with well-behaved increments and a process that has zero M1 modulus of continuity ($L^{N}$ is monotone). Substituting (\ref{Ex_eq_Decom}) into $H_{\R}$ from Proposition \ref{Ex_Prop_Summary} gives:

\begin{proposition}[Decomposition trick]
\label{Ex_Prop_Decomp}
For every $\phi \in \S$, $N \geq 1$ and $0 \leq t_{1} < t_{2} < t_{3} \leq T$ 
\[
H_{\R}(\nu^{N}_{t_{1}}(\phi), \nu^{N}_{t_{2}}(\phi), \nu^{N}_{t_{3}}(\phi))
	 \leq |\bar{\nu}^{N}_{t_{1}}(\phi) - \bar{\nu}^{N}_{t_{2}}(\phi)|
	+ |\bar{\nu}^{N}_{t_{2}}(\phi) - \bar{\nu}^{N}_{t_{3}}(\phi)|. 
\]
\end{proposition}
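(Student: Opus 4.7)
The plan is to exploit the decomposition (\ref{Ex_eq_Decom}), namely $\nu^{N}_{t}(\phi) = \bar{\nu}^{N}_{t}(\phi) - \phi(0) L^{N}_{t}$, together with the monotonicity of $L^{N}$, to choose a specific $\lambda$ that kills the boundary-loss contribution inside the infimum defining $H_{\R}$.

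First I would fix $0 \leq t_{1} < t_{2} < t_{3} \leq T$ and use that $L^{N}$ is non-decreasing, so $L^{N}_{t_{2}} \in [L^{N}_{t_{1}}, L^{N}_{t_{3}}]$. If $L^{N}_{t_{1}} = L^{N}_{t_{3}}$ then any $\lambda \in [0,1]$ works; otherwise set
\[
\lambda := \frac{L^{N}_{t_{2}} - L^{N}_{t_{1}}}{L^{N}_{t_{3}} - L^{N}_{t_{1}}} \in [0,1],
\]
which is precisely the convex weight making $L^{N}_{t_{2}} = (1-\lambda) L^{N}_{t_{1}} + \lambda L^{N}_{t_{3}}$.

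Next, substituting (\ref{Ex_eq_Decom}) into $\nu^{N}_{t_{2}}(\phi) - (1-\lambda)\nu^{N}_{t_{1}}(\phi) - \lambda \nu^{N}_{t_{3}}(\phi)$, the $\phi(0)$ terms cancel by the defining property of $\lambda$, leaving only
\[
\bar{\nu}^{N}_{t_{2}}(\phi) - (1-\lambda)\bar{\nu}^{N}_{t_{1}}(\phi) - \lambda \bar{\nu}^{N}_{t_{3}}(\phi)
 = (1-\lambda)\bigl(\bar{\nu}^{N}_{t_{2}}(\phi) - \bar{\nu}^{N}_{t_{1}}(\phi)\bigr) + \lambda \bigl(\bar{\nu}^{N}_{t_{2}}(\phi) - \bar{\nu}^{N}_{t_{3}}(\phi)\bigr).
\]
A single triangle inequality, together with $(1-\lambda), \lambda \leq 1$, bounds the absolute value of this expression by $|\bar{\nu}^{N}_{t_{1}}(\phi) - \bar{\nu}^{N}_{t_{2}}(\phi)| + |\bar{\nu}^{N}_{t_{2}}(\phi) - \bar{\nu}^{N}_{t_{3}}(\phi)|$. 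Since $H_{\R}$ is an infimum over $\lambda \in [0,1]$, this particular choice yields the claimed bound.

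There is no real obstacle here — the whole content is the observation that a monotone function always lies on the straight-line interpolant through any two of its ordered values, so that the M1 modulus of the lost-mass term vanishes when one picks the correct $\lambda$. The only thing to be a little careful about is the degenerate case $L^{N}_{t_{1}} = L^{N}_{t_{3}}$, which is handled trivially since then $L^{N}$ is constant on $[t_{1}, t_{3}]$ and the boundary-loss contribution is already identically zero.
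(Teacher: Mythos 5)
Your proof is correct and follows essentially the same route as the paper: substitute the decomposition $\nu^{N}_{t}(\phi) = \bar{\nu}^{N}_{t}(\phi) - \phi(0)L^{N}_{t}$ into $H_{\R}$, use the monotonicity of $L^{N}$ to choose a $\lambda$ annihilating the boundary-loss term, and bound the remaining convex combination by the triangle inequality. You merely make explicit the choice $\lambda = (L^{N}_{t_{2}} - L^{N}_{t_{1}})/(L^{N}_{t_{3}} - L^{N}_{t_{1}})$ that the paper leaves implicit.
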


\begin{proof}
Carry out the aforementioned substitution and apply the triangle inequality to get
\[
\textrm{l.h.s.} \leq \textrm{r.h.s.} + |\phi(0)| \inf_{\lambda \in [0,1]} |L^{N}_{t_{2}} - (1 - \lambda) L^{N}_{t_{1}} - \lambda L^{N}_{t_{3}}|.
\]
Since $L^{N}$ is monotone increasing, the final term is zero for $t_{1} < t_{2} < t_{3}$, and this completes the proof.
\end{proof}

Our trick makes the remainder of the tightness proof routine:

\begin{theorem}
\label{Ex_Thm_Tightness}
The sequence $(\nu^{N})_{N \geq 1}$ is tight on $(D_{\Sdual}, \M)$. 
\end{theorem}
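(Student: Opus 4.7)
The plan is to invoke Theorem \ref{Char_Thm_Tightness} to reduce tightness in $(D_{\Sdual}, \M)$ to tightness of the real-valued projections $\nu^N(\phi)$ in $(D_\R, \M)$ for each fixed $\phi \in \S$, and then to verify the two hypotheses of Proposition \ref{Ex_Prop_Summary}. Proposition \ref{Ex_Prop_Decomp} has already reduced the $H_\R$-condition to controlling increments of the continuous process $\bar{\nu}^N$, so the main task is to derive a moment estimate of the form
\[
\E \bigl| \bar{\nu}^N_t(\phi) - \bar{\nu}^N_s(\phi) \bigr|^p \leq C_\phi \, |t-s|^{p/2}
\]
for some $p > 2$; Markov's inequality then supplies the Avram bound with $a = p$ and $1+b = p/2 > 1$.

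To obtain the moment estimate, I would apply It\^o's formula to $\phi(X^{i,N}_{\,\cdot\, \wedge \tau^{i,N}})$ and average over $i$. Since each $X^{i,N}$ has quadratic variation $du$, this produces
\[
\bar{\nu}^N_t(\phi) - \bar{\nu}^N_s(\phi) = \frac{1}{2}\int_s^t \nu^N_u(\phi'')\,du + \int_s^t \rho(L^N_u)\nu^N_u(\phi')\,dW_u + M^N_{s,t},
\]
where $M^N_{s,t}$ collects the idiosyncratic stochastic integrals against $W^1,\ldots,W^N$. Since $|\rho| \leq 1$ and $|\nu^N_u(\psi)| \leq \|\psi\|_\infty$ for bounded $\psi$, Burkholder--Davis--Gundy with $p = 4$ bounds the drift and common-noise contributions by a constant multiple of $(t-s)^2(\|\phi''\|_\infty^4 + \|\phi'\|_\infty^4)$; orthogonality of the $W^i$ moreover gives $M^N$ an $N^{-1}$ gain, so its contribution is negligible in $N$.

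For the endpoint oscillation condition, I would use the decomposition $\nu^N_t(\phi) = \bar{\nu}^N_t(\phi) - \phi(0) L^N_t$ from (\ref{Ex_eq_Decom}) to bound
\[
\sup_{t \in (0,\delta)} \bigl|\nu^N_t(\phi) - \nu^N_0(\phi)\bigr| \leq \sup_{t \in (0,\delta)}\bigl|\bar{\nu}^N_t(\phi) - \bar{\nu}^N_0(\phi)\bigr| + |\phi(0)|\,L^N_\delta,
\]
and symmetrically near $T$. Doob's inequality applied to the It\^o representation above controls the first supremum in $L^2$ by $C_\phi \sqrt{\delta}$, uniformly in $N$. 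For the $L^N$-term, L\'evy's characterization implies that each $X^{i,N}$ is a standard Brownian motion started from $X^i_0$ (the diffusion coefficients satisfy $\rho^2 + (1 - \rho^2) = 1$), and exchangeability gives $\E L^N_\delta = \P(\tau \leq \delta)$, where $\tau$ is the first passage of such a Brownian motion to $0$. Since $X^i_0$ has density $f$ on $(0,\infty)$, this probability tends to $0$ as $\delta \to 0$ by dominated convergence; the increment at the right endpoint is treated analogously using continuity of the hitting-time law on $(0,T]$.

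The main obstacle is the common-noise stochastic integral $\int_s^t \rho(L^N_u)\nu^N_u(\phi')\,dW_u$: unlike the idiosyncratic martingale it enjoys no $N^{-1}$ averaging, so the full $|t-s|^{p/2}$ scaling must be extracted from boundedness of the integrand alone via BDG. Once this is in hand, assembling the Markov and Doob estimates to complete the verification of Proposition \ref{Ex_Prop_Summary} is routine.
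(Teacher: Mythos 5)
Your proposal is correct and follows the same skeleton as the paper: reduce via Theorem \ref{Char_Thm_Tightness} to the real projections, verify Proposition \ref{Ex_Prop_Summary}, and use the decomposition (\ref{Ex_eq_Decom}) together with Proposition \ref{Ex_Prop_Decomp} so that only increments of the continuous process $\bar{\nu}^{N}$ and the monotone process $L^{N}$ need to be controlled. The one place where you diverge is the derivation of the fourth-moment bound $\E|\bar{\nu}^{N}_{t}(\phi)-\bar{\nu}^{N}_{s}(\phi)|^{4}=O(|t-s|^{2})$: you obtain it by applying It\^o's formula to $\phi(X^{i,N}_{\cdot\wedge\tau^{i,N}})$, splitting off the drift, the common-noise integral and the idiosyncratic martingale, and invoking BDG on each piece, whereas the paper simply writes $|\phi(X^{i,N}_{t\wedge\tau^{i,N}})-\phi(X^{i,N}_{s\wedge\tau^{i,N}})|\leq\Vert\phi\Vert_{\mathrm{lip}}|X^{i,N}_{t\wedge\tau^{i,N}}-X^{i,N}_{s\wedge\tau^{i,N}}|$, applies H\"older to the average over $i$, and uses that each $X^{i,N}$ is marginally a standard Brownian motion (as you also note via L\'evy's characterisation, since $\rho^{2}+(1-\rho^{2})=1$), for which the stopped fourth-moment increment bound is immediate. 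Your route is heavier but perfectly valid (all the integrands are bounded, and $\rho(L^{N})$ is piecewise constant so the stochastic integrals are unproblematic), and it has the advantage of being robust to settings where the particles are not marginally Brownian; the paper's Lipschitz argument is a one-liner but exploits that special structure. Your treatment of the endpoint condition --- Doob on the martingale parts, a trivial bound on the drift, and $\E L^{N}_{\delta}=\P(\tau\leq\delta)\to 0$ by absolute continuity of the initial law and of the first-passage law --- matches the paper's in substance and is, if anything, slightly more careful, since the paper applies Doob's inequality to $\bar{\nu}^{N}(\phi)$ without explicitly isolating its martingale part.
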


\begin{proof}
For $\phi \in E$, let $\Vert\phi\Vert_{\mathrm{lip}}$ denote its Lipschitz constant. Since the $X^{i,N}$ are just 1d Brownian motions, H\"older's inequality gives
\[
\E[|\bar{\nu}^{N}_{t}(\phi)-  \bar{\nu}^{N}_{s}(\phi)|^{4}]
	\leq \frac{1}{N} \sum_{i=1}^{N} \E [|\phi(X^{i,N}_{t \wedge \tau^{i,N}}) - \phi(X^{i,N}_{s \wedge \tau^{i,N}})|^{4}]
	\leq \Vert\phi\Vert_{\mathrm{lip}}^{4} \E[|X^{1,N}_{t \wedge \tau^{1,N}} - X^{1,N}_{s \wedge \tau^{1,N}}|^{4}],
\]
and the final expression is $O(|t-s|^{2})$, uniformly in $N$. Therefore Markov's inequality and Proposition \ref{Ex_Prop_Decomp} give the first statement in Proposition \ref{Ex_Prop_Summary}. 

For the second statement in Proposition \ref{Ex_Prop_Summary}, we can first apply the decomposition in (\ref{Ex_eq_Decom}) to get 
\[
\sup_{t \in (0,\delta)} |\nu^{N}_{t}(\phi) - \nu^{N}_{0}(\phi) |
	\leq \sup_{t \in (0,\delta)} |\bar{\nu}^{N}_{t}(\phi) - \bar{\nu}^{N}_{0}(\phi) | + |\phi(0)|L^{N}_{\delta},
\]
and likewise for the term at $T - \delta$. Using Doob's maximal inequality  and repeating the H\"older calculation above gives
\[
\E\sup_{t \in (0,\delta)} |\nu^{N}_{t}(\phi) - \nu^{N}_{0}(\phi) |
	\leq o(1) + |\phi(0)| \E L^{N}_{\delta} 
	= o(1)+ |\phi(0)| \P(0 < \tau^{1,N} \leq \delta).
\]
Since $X^{1,N}$ is a Brownian motion, the final term is $o(1)$ uniformly in $N$, as $\delta \to 0$. So applying Markov's inequality gives the second statement in Proposition \ref{Ex_Prop_Summary}.
\end{proof}

\begin{remark}[Full convergence]
To prove full weak convergence of $(\nu^{N})_{N \geq 1}$, one approach would be to show that all limit points are supported on solutions of the evolution equation (\ref{Ex_Eq_EvoEqu}) and that this equation has a unique solution. The latter task is dependent on the level of regularity of $\rho$, however the former could be proved using martingale methods. To this end, an application of It\^o's formula gives
\[
d\nu^{N}_{t}(\phi) = \frac{1}{2} \nu^{N}_{t}(\phi'')dt + \rho(L_{t}^{N}) \nu^{N}_{t}(\phi')dW_{t} + dI^{N}_{t}(\phi),
	\qquad \textrm{where} \quad \E |I^{N}_{t}(\phi)|^{2}=O(N^{-1}),
\]
for every $\phi \in \S$ such that $\phi(0) = 0$. By Theorem \ref{Ex_Thm_Tightness}, we have subsequential weak limits for every term in this equation, and the \M\ topology (on \R) is well-behaved with respect to integrals \cite[Thm.~11.5.1]{whitt2002}.
\end{remark}

\begin{remark}[Why not work with $\bar{\nu}^{N}$?]
It might seem easier to work with the process $\bar{\nu}^{N}$ from the start. Notice, however, that the above evolution equation would only hold for $\bar{\nu}^{N}$ if $\phi'(0) = 0 = \phi''(0)$. This has the effect of moving the boundary problems we had in calculating the J1 modulus of continuity of $(\nu^{N})_{N \geq 1}$ to the analysis of the limiting evolution equation.
\end{remark}

\begin{remark}[Why \Sdual?]
\Sdual\ seems an excessively large range space for the above example, however, it is easy to recover that any limiting process must be measure-valued by the Riesz--Markov--Kakutani theorem. Developing a general theory for \Sdual\ also allows us to approach fluctuation problems where the limiting processes would no longer take values in the finite measures. 
\end{remark}





\ACKNO{I am grateful for the helpful suggestions of the anonymous referee. Work supported by the Heilbronn Institute for Mathematical Research and an EPSRC studentship whilst at the University of Oxford.}


\end{document}